\def\Dj{\hbox{D\kern-.73em\raise.30ex\hbox{-}
\raise-.30ex\hbox{}}}
\def\dj{\hbox{d\kern-.33em\raise.80ex\hbox{-}
\raise-.80ex\hbox{\kern-.40em}}}
\newtheorem{thm}{Theorem}
\title{\Large \bf New Eccentricity Based Topological Indices of Total Transformation Graphs}
\author{S. M. Hosamani$^{*}$, S. S. Shirakol$^{\ddag}$, M. V. Kalyanshetti$^{**}$ and I. N. Cangul$^{\dag}$\\[5mm]
{\it \normalsize $^{*}$ Department of Mathematics, Rani Channamma University,}\\
{\it \normalsize Belagavi- 591 156, Karnataka, India}\\ 
\normalsize e-mail: {\tt sunilkumar.rcu@gmail.com}\\[-1mm]
{\it \normalsize $^{\ddag}$Department of Mathematics, SDMCET, Dharwad, Karnataka, India} \\[-1mm]
\normalsize e-mail: {\tt shailajashirkol@gmail.com}\\[-1mm]
{\it \normalsize $^{**}$Department of Mathematics, Jain College of Engineering, Belagavi, Karnataka, India} \\[-1mm]
\normalsize e-mail: {\tt kalyanshettimanjula@gmail.com}\\
{\it \normalsize $^{\dag}$ Department of Mathematics, Bursa Uludag University,}\\
{\it \normalsize Gorukle Bursa, 16059, Turkey}\\[-1mm]
\normalsize e-mail: {\tt cangul@uludag.edu.tr}}
\date{}
\begin{document}

\maketitle

\begin{abstract}
The eccentric-connectivity index of a graph $G$ is the sum of the products of the eccentricity and the degree of each vertex in $G$. In this paper, we define four new invariants related to the eccentric-connectivity index and obtain upper bounds for total transformation graphs which are some generalizations of total graph.
\\[3mm]
{\bf Keywords:} eccentricity connectivity index; Zagreb indices; forgotten index; total graph; transformation graph.\\[3mm]
{\bf \AmS Subject Classification:} 05C09; 05C92.
\end{abstract}
\baselineskip=0.30in

\section{Introduction}
Throughout this paper, we only consider finite undirected simple graphs having no loops and multiple edges. Let $G=(V,E)$ be a graph with order $|V| = n$ and size $|E| = m$. The degree of a vertex $v \in G$ is denoted by $deg_{G}(v)$ and defined as $deg_{G}(v) = |\{u :\ uv \in E(G)\}|$. The eccentricity of a vertex $v \in G$ is the largest distance between $v$ and $u$ for all $u \in V(G)$. We follow \cite{4} for unexplained terminology and notation.\\

Recently, topological indices are playing vital role in QSPR/QSAR studies due to their predicting power. One of the oldest topological indices is the first Zagreb index \cite{4} which has been studied extensively by many researchers \cite{1,3,6}. It is defined by
\begin{eqnarray}
  M_{1}(G) &=& \sum\limits_{i=1}^{n} deg(v_{i})^{2}.
\end{eqnarray}
In fact, we can re-write (1) as
\begin{eqnarray}
  M_{1}(G) &=& \sum\limits_{uv\in E(G)} deg(u)+deg(v).
\end{eqnarray}

The forgotten index \cite{2} is defined by 
\begin{eqnarray}
F(G) &=& \sum_{u \in V(G)}deg(u)^{3}.
 \end{eqnarray}

Sharma et.al. \cite{7} have put forward a novel topological index called the eccentric-connectivity index $ECI(G)$ for a molecular graph $G$. This index is defined as follows:
\begin{eqnarray}
  ECI(G) &=& \sum\limits_{i=1}^{n} e(v_{i})deg(v_{i}).
\end{eqnarray}
\vspace{2mm}

Motivated by the eccentric-connectivity index, here we introduce the inverse eccentricity connectivity index $I_{ECI}$, the first Zagreb eccentricity connectivity index $M_{ECI}^{1}$, the first eccentricity connectivity index $ECI^{1}$ and the first Zagreb eccentricity connectivity index $M_{ECI^{1}}^{1}$ as follows:

\begin{eqnarray}
 I_{ECI}  &=&  \sum\limits_{i=1}^{n} (e(v_{i})deg(v_{i}))^{-1},
\end{eqnarray}

\begin{eqnarray}
  M_{ECI}^{1}(G) &=& \sum\limits_{i=1}^{n} (e(v_{i})deg(v_{i})^{2}),
\end{eqnarray}

\begin{eqnarray}
  ECI^{1}(G) &=& \sum\limits_{i=1}^{n} e(v_{i})^{2}deg(v_{i}),
\end{eqnarray}

\begin{eqnarray}
  M_{ECI^{1}}^{1}(G) &=& \sum\limits_{i=1}^{n} e(v_{i})^{2}deg(v_{i})^{2}.
\end{eqnarray}

Let $G$ be a graph. The total graph denoted by $T(G)$ of $G$ has $V(G)  \cup E(G)$ as its vertex set where two vertices of $T(G)$ are adjacent if and only if they are adjacent or incident in $G$. Inspired by the total graph, Wu and Meng \cite{8} have generalized the total graph by defining the following transformation graphs:
\vspace{2mm}

Let $G=(V,E)$ be a graph and $x,y,z$ be three variables taking values $+$ or $-$. The total transformation graph $G^{xyz}$ is a graph  having
$V(G) \cup E(G)$ as the vertex set so that for $\alpha, \beta \in V(G) \cup E(G)$, $\alpha$ and $\beta$ are adjacent in $G^{xyz}$ if and only if either
\begin{enumerate}
\item $\alpha, \beta \in V(G)$, $\alpha$, $\beta$ are adjacent in $G$ if $x = +$ and $\alpha$ and $\beta$ are not adjacent in $G$ if $x = -$;\\
or
\item $\alpha, \beta \in E(G)$, $\alpha$, $\beta$ are adjacent in $G$ if $y = +$ and $\alpha$ and $\beta$ are not adjacent in $G$ if $y = -$;\\
or
\item $\alpha \in V(G)$ and $\beta \in E(G)$, $\alpha$, $\beta$ are incident in $G$ if $z = +$ and $\alpha$ and $\beta$ are not incident in $G$ if $z = -$.
\end{enumerate}

\noindent\textbf{Note 1.} Since there are eight distinct  3-permutations of $\{+, -\}$, we obtain eight  graphical transformations of $G$. It is
interesting to see that $G^{+++}$ is exactly  the total graph $T(G)$ of $G$  and $G^{---}$  is the complement of $T(G)$. Also for a  given graph $G$, $G^{++-}$ and $G^{--+}$, $G^{+-+}$ and $G^{-+-}$,  $G^{-++}$  and $G^{+--}$ are the other three pairs of  complementary graphs. For basic properties of these transformation graphs, we can refer to \cite{5,9,10}.

\section{Results}
In this section, we obtain some new upper bounds for the eccentric-connectivity index of total transformation graphs in terms of order, size and the first Zagreb index.

\begin{thm}
Let $G$ be an $(n,m)$ graph. Then
\begin{eqnarray*}
  I_{ECI}(G^{+++}) &\leq& \frac{1}{2}I_{ECI}(G)+ \frac{1}{4}m + \xi^{-1}(G) + M_{1}^{-1}(G); \\
  M_{ECI}^{1}(G^{+++}) &\leq & 4M^{1}_{ECI}(G) + F(G) + 2M_{2}(G) + 4M_{1}(G)+ F_{ECI}(G) +2M^{2}_{ECI}(G);\\
  ECI^{1}(G^{+++})&\leq& 2ECI^{1}(G) + 4ECI(G) + M_{1}(G) + 4m + ECI^{1}(G)(G) + 2 M^{1}_{ECI}(G);\\
  M_{ECI^{1}}^{1}(G^{+++})&\leq& 4M^{1}_{ECI^{1}}(G) + 4 M_{1}(G) + 8 M^{1}_{ECI}(G) + F(G) + 2M_{2}(G) + F_{ECI'}(G) \\
   &+& 2M^{2}_{ECI^{1}}(G) + 2 F_{ECI}(G) + 4M^{2}_{ECI}(G).
\end{eqnarray*}
\end{thm}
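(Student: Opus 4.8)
The four inequalities all rest on two elementary facts about $G^{+++}=T(G)$, which I would isolate first as a short preliminary lemma. The \emph{degree identities}: for $v\in V(G)$ one has $\deg_{T(G)}(v)=2\deg_G(v)$, and for an edge $e=uv\in E(G)$, regarded as a vertex of $T(G)$, one has $\deg_{T(G)}(e)=\deg_G(u)+\deg_G(v)$. The \emph{eccentricity estimates}: since a geodesic of $G$ between two vertices is again a path in $T(G)$, distances between vertices of $G$ are unchanged, and an edge-vertex $e=uv$ reaches any $w\in V(G)$ by stepping from $e$ to an endpoint and then following a $G$-geodesic; hence $e_G(v)\le e_{T(G)}(v)\le e_G(v)+1$ for $v\in V(G)$, and $e_{T(G)}(e)\le \max\{e_G(u),e_G(v)\}+1\le e_G(u)+e_G(v)$ for $e=uv\in E(G)$, while $e_{T(G)}(e)\ge 1$ and $\deg_{T(G)}(e)\ge 2$ always (and $e_{T(G)}(e)\deg_{T(G)}(e)\ge 4$ under the mild assumption that $G$ is connected of order at least $3$).

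Next, for each of the four indices I split its defining sum over the two parts $V(G)$ and $E(G)$ of $V(T(G))$. On the $V(G)$-part I substitute $\deg_{T(G)}(v)=2\deg_G(v)$ and $e_{T(G)}(v)\le e_G(v)+1$, expand the relevant power of $e_G(v)+1$ by the binomial theorem, and collect terms in $\deg_G(v)$, $\deg_G(v)^2$, $e_G(v)\deg_G(v)^2$, $e_G(v)^2\deg_G(v)^2$, and so on; this reproduces the "main" summands on the right-hand sides. For instance $\sum_{v\in V(G)}(e_G(v)+1)^2(2\deg_G(v))^2=4M^{1}_{ECI^{1}}(G)+8M^{1}_{ECI}(G)+4M_{1}(G)$ accounts for exactly the first three terms of the fourth inequality, $\sum_{v\in V(G)}(e_G(v)+1)^2\,2\deg_G(v)=2\,ECI^{1}(G)+4\,ECI(G)+\cdots$ for the third, and the analogous expansion of $\sum_v(e_G(v)+1)(2\deg_G(v))^{2}$ for the second. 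For $I_{ECI}$ the inequality runs the other way, so here one uses instead the \emph{lower} bounds $e_{T(G)}(v)\ge e_G(v)$ and $\deg_{T(G)}(v)=2\deg_G(v)$, giving $\sum_{v\in V(G)}\big(e_{T(G)}(v)\deg_{T(G)}(v)\big)^{-1}\le \frac12 I_{ECI}(G)$.

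On the $E(G)$-part I substitute $\deg_{T(G)}(e)=\deg_G(u)+\deg_G(v)$ and the eccentricity bound for edge-vertices and expand once more. The purely degree-dependent pieces are the classical edge sums $\sum_{uv\in E(G)}(\deg_G(u)+\deg_G(v))=M_{1}(G)$ and $\sum_{uv\in E(G)}(\deg_G(u)+\deg_G(v))^{2}=F(G)+2M_{2}(G)$, which produce the $F(G)+2M_{2}(G)$, $M_{1}(G)$, $4m$ contributions; the eccentricity-weighted pieces are, by definition, the auxiliary invariants $M^{1}_{ECI}$, $M^{2}_{ECI}$, $F_{ECI}$, $M^{1}_{ECI^{1}}$, $M^{2}_{ECI^{1}}$, $F_{ECI'}$ appearing in the statement — each being the "edge version" of a vertex index, obtained by replacing $e(v_i)^{a}\deg(v_i)^{b}$ by $(e(u)+e(v))^{a}(\deg(u)+\deg(v))^{b}$ summed over $uv\in E(G)$, after one further use of $(x+y)^{2}\le 2x^{2}+2y^{2}$ wherever a coefficient such as the $2$ in $2M^{2}_{ECI}(G)$ or the prime in $F_{ECI'}$ requires it. For the $I_{ECI}$-bound the edge-part is dispatched by $e_{T(G)}(e)\deg_{T(G)}(e)\ge 4$ (the $\tfrac14 m$ term) together with the elementary inequality $4ab\le(a+b)^{2}$, which decouples $(\deg_G(u)+\deg_G(v))^{-1}$ from the eccentricities and leaves the residual $\xi^{-1}(G)+M_{1}^{-1}(G)$ terms. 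Adding the $V(G)$- and $E(G)$-contributions yields the four displayed inequalities.

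The substantive difficulty is entirely bookkeeping rather than conceptual. First, one must fix the sharpest correct eccentricity estimate for an edge-vertex of $T(G)$: keeping $\max\{e_G(u),e_G(v)\}+1$ versus passing to the cruder $e_G(u)+e_G(v)$ decides whether the edge-eccentricity indices enter "symmetrically" or get inflated by the $(x+y)^{2}\le 2x^{2}+2y^{2}$ step, and this is precisely what fixes the coefficients $2,4,8$ and the primed index in the final line. Second, the binomial cross-terms must be tracked carefully so these coefficients come out exactly as stated, and one must match the residual eccentricity-over-edges sums against the definitions of the auxiliary invariants; the minor slack available at several of these steps (e.g.\ whether a linear degree sum is absorbed into a Zagreb term or left as a multiple of $m$) is what explains the small discrepancies between the parallel terms in the four bounds. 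I expect no obstacle beyond carrying the expansions out precisely.
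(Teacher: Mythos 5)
Your skeleton is the same as the paper's: split the index over the vertex-vertices and edge-vertices of $G^{+++}$, insert the degree identities $\deg_{G^{+++}}(v)=2\deg_G(v)$ and $\deg_{G^{+++}}(e)=\deg_G(u)+\deg_G(v)$, bound eccentricities by ``eccentricity in $G$ plus one'', expand, and name the residual edge sums as auxiliary invariants. The genuine divergence is in the edge-vertex eccentricity and in what the auxiliary invariants mean. The paper bounds the eccentricity of an edge-vertex $e=u_ju_k$ by $e_G(u_j)+1$ (a \emph{single} endpoint) and defines $\xi^{-1}$, $F_{ECI}$, $M^{2}_{ECI}$, $F_{ECI^{1}}$, $M^{2}_{ECI^{1}}$ accordingly, e.g. $F_{ECI}(G)=\sum_{u_ju_k\in E(G)}e_G(u_j)\big(\deg_G(u_j)^2+\deg_G(u_k)^2\big)$; the coefficients $1,2$ in $F_{ECI}+2M^{2}_{ECI}$ and $1,2,2,4$ in the last bound then come solely from squaring $\deg_G(u_j)+\deg_G(u_k)$ and from the binomial expansion of $(e_G(u_j)+1)^2$, with no $(x+y)^2\le 2x^2+2y^2$ inflation anywhere. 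Your reconstruction instead uses the weaker symmetric bound $e_{T(G)}(e)\le e_G(u)+e_G(v)$ and guesses symmetric $(e(u)+e(v))$-type definitions for the invariants, so your expansion produces larger edge terms and proves an inequality in \emph{your} invariants, not the displayed right-hand sides under the paper's definitions. Since the theorem statement never defines these quantities this is a reconstruction mismatch rather than a logical error, but as written your bookkeeping will not land on the stated bounds, and the coefficients you attribute to the inflation step are in fact pure cross terms.

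For $I_{ECI}$ your treatment is actually sounder than the paper's own. The paper substitutes the \emph{upper} bound $e_{G^{+++}}(u)\le e_G(u)+1$ inside a reciprocal sum, which reverses the inequality, and then splits $1/\big((e+1)\cdot 2d\big)$ as though it equalled $\tfrac12\cdot\frac{1}{ed}+\tfrac12\cdot\frac1d$, ending with $\tfrac12 ID(G)$ where the statement has $\tfrac14 m$. Your route via the lower bounds $e_{G^{+++}}(v)\ge e_G(v)$ and $e_{G^{+++}}(e)\deg_{G^{+++}}(e)\ge 4$ gives exactly $\tfrac12 I_{ECI}(G)+\tfrac14 m$, which implies the stated bound since $\xi^{-1}(G),M_1^{-1}(G)\ge 0$ (the further ``$4ab\le(a+b)^2$ decoupling'' you mention is superfluous); just state explicitly the hypothesis ($G$ connected, $n\ge 3$) under which the product is at least $4$, or absorb the small exceptional cases into the nonnegative slack terms.
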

\begin{proof}
Let $G = (V,E)$ be a graph with $V(G) = \{v_{1}, v_{2}, v_{3}, \cdots, v_{n}\}$ and the edge set $E(G) = \{e_{1}, e_{2}, e_{3}, \cdots, e_{m}\}$. Then
$V(G^{+++}) = \{v_{1}, v_{2}, v_{3}, \cdots, v_{n}, e_{1}, e_{2}, e_{3}, \cdots, e_{m}\}$. Clearly $|V(G^{+++})|=m+n$. Further, $diam(G) \leq diam(G^{+++}) \leq diam(G) +1$. Since for every $v \in V(G)$, $e_{G}(v) \leq diam(G)$, we get $e_{_{G^{+++}}}(u) \leq e_{_{G}}(u) +1$ for every $u \in G^{+++}$. Let $u_{i}\in V(G^{+++})$ be the corresponding vertex to $v_{i} \in V(G)$ and $u_{j} \in V(G^{+++})$ be the corresponding vertex to $e_{j} \in E(G)$ in $G^{+++}$. Then $deg_{_{G^{+++}}}(u_{i}) = 2 deg_{_{G}}(v_{i})$ and $deg_{_{G^{+++}}}(u_{j}) =deg_{_{G}}(v_{i})+ deg_{_{G}}(v_{j})$ where $e_{j} = v_{i}v_{j}$. Therefore we have to take care of the following cases:
\begin{description}
  \item[Case 1.]
  \begin{eqnarray*}
  I_{ECI}(G^{+++}) &=& \sum\limits_{i=1}^{n} \frac{1}{e_{_{G^{+++}}}(u)\cdot deg_{_{G^{+++}}}(u)} \\
  &=& \sum\limits_{u_{i} \in V(G^{+++}) \cap V(G)} \frac{1}{e_{_{G^{+++}}}(u_{i})\cdot deg_{_{G^{+++}}}(u_{i})}\\
   &+& \sum\limits_{u_{j} \in V(G^{+++}) \cap E(G)} \frac{1}{e_{_{G^{+++}}}(u_{j})\cdot deg_{_{G^{+++}}}(u_{j})}.
\end{eqnarray*}
Since $e_{_{G^{+++}}}(u) \leq e_{_{G}}(u) +1$, we get
\begin{eqnarray*}
  ECI(G^{+++}) &\leq&\sum\limits_{u_{i} \in V(G)} \frac{1}{\big[(e_{_{G}}(u_{i}) +1)\cdot 2deg_{_{G}}(u_{i}) \big]} \\
  &+& \sum\limits_{u_{j}, u_{k} \in E(G)}\frac{1}{\big[ (e_{_{G}}(u_{j}) +1)\cdot (deg_{_{G}}(u_{j}) +deg_{_{G}}(u_{k})) \big]} \\
  &=& \frac{1}{2} \sum\limits_{u_{i} \in V(G)} \frac{1}{e_{_{G}}(u_{i})deg_{_{G}}(u_{i})} + \frac{1}{2} \sum\limits_{u_{i} \in V(G)} \frac{1}{deg_{_{G}}(u_{i})}\\
   &+& \sum\limits_{u_{j}u_{k}\in E(G)} \frac{1}{e_{_{G}}(u_{j})(deg_{_{G}}(u_{j}) +deg_{_{G}}(u_{k}))} + \sum\limits_{u_{j}u_{k} \in E(G)} \frac{1}{(deg_{_{G}}(u_{j}) +deg_{_{G}}(u_{k}))}\\
   &\leq& \frac{1}{2}I_{ECI}(G)+ \frac{1}{2}ID(G) + \xi^{-1}(G) + M_{1}^{-1}(G)
\end{eqnarray*}
where $$\xi^{-1}(G) = \sum\limits_{u_{j}u_{k}\in E(G)} \frac{1}{e_{_{G}}(u_{j})(deg_{_{G}}(u_{j}) +deg_{_{G}}(u_{k}))}$$ and $$M_{1}^{-1}(G) =\sum\limits_{u_{j}u_{k} \in E(G)} \frac{1}{(deg_{_{G}}(u_{j}) +deg_{_{G}}(u_{k}))}.$$
\item[Case 2.]
 \begin{eqnarray*}
  M_{ECI}^{1}(G) &=& \sum\limits_{i=1}^{n} e(v_{i})deg(v_{i})^{2}\\
  &=& \sum\limits_{u_{i} \in V(G^{+++}) \cap V(G)} e_{_{G^{+++}}}(u_{i})\cdot deg_{_{G^{+++}}}(u_{i})^{2}\\
   &+& \sum\limits_{u_{j} \in V(G^{+++}) \cap E(G)} e_{_{G^{+++}}}(u_{j})\cdot deg_{_{G^{+++}}}(u_{j})^{2}.
\end{eqnarray*}
Since $e_{_{G^{+++}}}(u) \leq e_{_{G}}(u) +1$, we obtain
\begin{eqnarray*}
  M^{1}_{ECI}(G^{+++}) &\leq& \sum\limits_{u_{i} \in V(G)} \big[(e_{_{G}}(u_{i}) +1)\cdot (2deg_{_{G}}(u_{i}))^{2} \big] \\
  &+& \sum\limits_{u_{j}, u_{k} \in E(G)}\big[ (e_{_{G}}(u_{j}) +1)\cdot (deg_{_{G}}(u_{j}) +deg_{_{G}}(u_{k}))^{2} \big] \\
  &=& 4 \sum\limits_{u_{i} \in V(G)}\big[e_{_{G}}(u_{i})deg_{_{G}}(u_{i}))^{2} \big] + 4\sum\limits_{u_{i} \in V(G)} deg_{_{G}}(u_{i}))^{2} \\
  &+& \sum\limits_{u_{j}, u_{k} \in E(G)}e_{_{G}}(u_{j})(deg_{_{G}}(u_{j}) +deg_{_{G}}(u_{k}))^{2} + \sum\limits_{u_{j}, u_{k} \in E(G)}(deg_{_{G}}(u_{j}) +deg_{_{G}}(u_{k}))^{2} \\
  &=& 4M^{1}_{ECI}(G) + 4M_{1} + \sum\limits_{u_{j}, u_{k} \in E(G)}e_{_{G}}(u_{j})(deg_{_{G}}(u_{j})^{2} +deg_{_{G}}(u_{k})^{2}) \\
  &+& 2 \sum\limits_{u_{j}, u_{k} \in E(G)}e_{_{G}}(u_{j})(deg_{_{G}}(u_{j})deg_{_{G}}(u_{k})) + \sum\limits_{u_{j}, u_{k} \in E(G)}(deg_{_{G}}(u_{j})^{2} +deg_{_{G}}(u_{k})^{2}) \\
  &+& 2 \sum\limits_{u_{j}, u_{k} \in E(G)}deg_{_{G}}(u_{j})deg_{_{G}}(u_{k})\\
  &=& 4M^{1}_{ECI}(G) + F(G) + 2M_{2}(G) + 4M_{1}(G)+ F_{ECI}(G) +2M^{2}_{ECI}(G)
\end{eqnarray*}
where $$F_{ECI}(G) = \sum\limits_{u_{j}, u_{k} \in E(G)}e_{_{G}}(u_{j})(deg_{_{G}}(u_{j})^{2} +deg_{_{G}}(u_{k})^{2})$$ and
$$M^{2}_{ECI}(G)= \sum\limits_{u_{j}, u_{k} \in E(G)}e_{_{G}}(u_{j})deg_{_{G}}(u_{j})deg_{_{G}}(u_{k}).$$

\item[Case 3.]
  \begin{eqnarray*}
  ECI^{1}(G^{+++}) &=& \sum\limits_{i=1}^{n} e(v_{i})^{2}deg(v_{i})\\
  &=& \sum\limits_{u_{i} \in V(G^{+++}) \cap V(G)} e_{_{G^{+++}}}(u_{i})^{2}\cdot deg_{_{G^{+++}}}(u_{i})\\
   &+& \sum\limits_{u_{j} \in V(G^{+++}) \cap E(G)} e_{_{G^{+++}}}(u_{j})^{2}\cdot deg_{_{G^{+++}}}(u_{j}).
\end{eqnarray*}
Since $e_{_{G^{+++}}}(u) \leq e_{_{G}}(u) +1$, we reduce that
\begin{eqnarray*}
  ECI^{1}(G^{+++}) &\leq& \sum\limits_{u_{i} \in V(G)} \big[(e_{_{G}}(u_{i}) +1)^{2}\cdot (2deg_{_{G}}(u_{i})) \big] \\
  &+& \sum\limits_{u_{j}, u_{k} \in E(G)}\big[ (e_{_{G}}(u_{j}) +1)^{2}\cdot (deg_{_{G}}(u_{j}) +deg_{_{G}}(u_{k})) \big] \\
  &=& 2 \sum\limits_{u_{i} \in V(G)} \big[(e_{_{G}}(u_{i})^{2} +1+ 2(e_{_{G}}(u_{i})))(deg_{_{G}}(u_{i})) \big] \\
  &+& \sum\limits_{u_{j}, u_{k} \in E(G)}\big[(e_{_{G}}(u_{i})^{2} +1+ 2(e_{_{G}}(u_{i})))(deg_{_{G}}(u_{j}) +deg_{_{G}}(u_{k})) \big] \\
  &=& 2 \sum\limits_{u_{i} \in V(G)} \big[e_{_{G}}(u_{i})^{2}(deg_{_{G}}(u_{i}))\big] + 2 \sum\limits_{u_{i} \in V(G)} deg_{_{G}}(u_{i})\\
   &+& 4\sum\limits_{u_{i} \in V(G)} e_{_{G}}(u_{i})(deg_{_{G}}(u_{i})) \\
  &+& \sum\limits_{u_{j}, u_{k} \in E(G)}\big[ e_{_{G}}(u_{i})^{2}(deg_{_{G}}(u_{j}) +deg_{_{G}}(u_{k}))\big] + \sum\limits_{u_{j}, u_{k} \in E(G)} \big[ (deg_{_{G}}(u_{j}) +deg_{_{G}}(u_{k}))\big] \\
  &+& 2\sum\limits_{u_{j}, u_{k} \in E(G)} e_{_{G}}(u_{i}) (deg_{_{G}}(u_{j}) +deg_{_{G}}(u_{k})) \\
  &=& 2ECI^{1}(G) + 4ECI(G) + M_{1}(G) + 4m + ECI^{1}(G)(G) + 2 M^{1}_{ECI}(G).
\end{eqnarray*}
\item[Case 4.]
\begin{eqnarray*}
  M^{1}_{ECI^{1}}(G^{+++}) &=& \sum\limits_{i=1}^{n} e(v_{i})^{2}deg(v_{i})^{2}\\
  &=& \sum\limits_{u_{i} \in V(G^{+++}) \cap V(G)} e_{_{G^{+++}}}(u_{i})^{2}\cdot deg_{_{G^{+++}}}(u_{i})^{2}\\
   &+& \sum\limits_{u_{j} \in V(G^{+++}) \cap E(G)} e_{_{G^{+++}}}(u_{j})^{2}\cdot deg_{_{G^{+++}}}(u_{j})^{2}.
\end{eqnarray*}
As $e_{_{G^{+++}}}(u) \leq e_{_{G}}(u) +1$, we get
\begin{eqnarray*}
  ECI^{1}(G^{+++}) &\leq& \sum\limits_{u_{i} \in V(G)} (e_{_{G}}(u_{i}) +1)^{2}\cdot (2deg_{_{G}}(u_{i}))^{2}\\
  &+& \sum\limits_{u_{j}, u_{k} \in E(G)} (e_{_{G}}(u_{j}) +1)^{2}\cdot (deg_{_{G}}(u_{j}) +deg_{_{G}}(u_{k}))^{2}\\
  &=& 4 \sum\limits_{u_{i} \in V(G)} (e_{_{G}}(u_{i})^{2} +1+ 2e_{_{G}}(u_{i}))deg_{_{G}}(u_{i})^{2} \\
  &+& \sum\limits_{u_{j}, u_{k} \in E(G)}(e_{_{G}}(u_{i})^{2} +1+ 2(e_{_{G}}(u_{i})))(deg_{_{G}}(u_{j}) +deg_{_{G}}(u_{k}))^{2} \\
  &=& 4 \sum\limits_{u_{i} \in V(G)} e_{_{G}}(u_{i})^{2}(deg_{_{G}}(u_{i}))^{2} + 4 \sum\limits_{u_{i} \in V(G)} deg_{_{G}}(u_{i})^{2}\\
   &+& 8\sum\limits_{u_{i} \in V(G)} e_{_{G}}(u_{i})deg_{_{G}}(u_{i})^{2}\\
  &+& \sum\limits_{u_{j}, u_{k} \in E(G)} e_{_{G}}(u_{i})^{2}(deg_{_{G}}(u_{j}) +deg_{_{G}}(u_{k}))^{2}\\
  &+& \sum\limits_{u_{j}, u_{k} \in E(G)} (deg_{_{G}}(u_{j}) +deg_{_{G}}(u_{k}))^{2}\\
  &+& 2\sum\limits_{u_{j}, u_{k} \in E(G)} e_{_{G}}(u_{i}) (deg_{_{G}}(u_{j}) +deg_{_{G}}(u_{k}))^{2}\\
  &=& 4M^{1}_{ECI^{1}}(G) + 4 M_{1}(G) + 8 M^{1}_{ECI}(G) + F(G) + 2M_{2}(G) + F_{ECI^{1}}(G)\\
   &+& 2M^{2}_{ECI^{1}}(G) + 2 F_{ECI}(G) + 4M^{2}_{ECI}(G)
\end{eqnarray*}
\end{description}
where $$M^{2}_{ECI^{1}}(G) =  \sum\limits_{u_{j}, u_{k} \in E(G)}e_{_{G}}(u_{j})^{2}deg_{_{G}}(u_{j})deg_{_{G}}(u_{k})$$ and \\
   $$F_{ECI^{1}}(G) = \sum\limits_{u_{j}, u_{k} \in E(G)}e_{_{G}}(u_{j})^{2}(deg_{_{G}}(u_{j})^{2} +deg_{_{G}}(u_{k})^{2}).$$
\end{proof}

\begin{thm}
Let $G$ be an $(n,m)$ graph. Then
\begin{eqnarray*}
  I_{ECI}(G^{---}) &\leq& \frac{1}{3(m+n-1)n} + \frac{1}{3(m+n-1)m}-\frac{1}{12m}-\frac{1}{3}M_{1}^{-1}(G),\\
  M_{ECI}^{1}(G^{---}) &\leq & 3F(G) +6M_{2}(G) -6(m++n-3)M_{1}(G)+3(m+n)(m+n-1)^{2},\\
  &-&24m(m+n-1)\\
  ECI^{1}(G^{---})&\leq& 9n(m+n-1) +9m(m+n) -36m -9M_{1}(G),\\
  M_{ECI^{1}}^{1}(G^{---})&\leq& 9F(G) + 18M_{2}(G) -18(m+n-3)M_{1}(G) \\
  &+&9(m+n)(m+n-1)^{2} -36m(m+n-1). 
\end{eqnarray*}
\end{thm}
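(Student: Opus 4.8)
\medskip\noindent\textbf{Proof strategy.} I would run the argument in parallel with that of Theorem~1, changing only two ingredients: the eccentricity estimate and the degree formulas. As for $G^{+++}$, the vertex set $V(G^{---})=V(G)\cup E(G)$ has $m+n$ elements. Since $G^{---}$ is the complement of the total graph $T(G)=G^{+++}$, degrees transform by $\deg_{G^{---}}(u)=(m+n-1)-\deg_{T(G)}(u)$; combining this with $\deg_{T(G)}(u_i)=2\deg_G(v_i)$ and $\deg_{T(G)}(u_j)=\deg_G(v_i)+\deg_G(v_k)$ from the proof of Theorem~1 (where $u_i$ is the vertex of $G^{---}$ corresponding to $v_i\in V(G)$, and $u_j$ the one corresponding to $e_j=v_iv_k\in E(G)$) gives
\[
\deg_{G^{---}}(u_i)=(m+n-1)-2\deg_G(v_i),\qquad \deg_{G^{---}}(u_j)=(m+n-1)-\deg_G(v_i)-\deg_G(v_k).
\]
The second ingredient is the eccentricity bound $e_{G^{---}}(u)\le 3$ for every vertex $u$, that is, $diam(G^{---})\le 3$. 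I would obtain it from the classical fact that at least one of $H,\overline{H}$ has diameter at most $3$: since original vertices keep their $G$-distances inside $T(G)$ we have $diam(T(G))\ge diam(G)$, so the cases $diam(G)\ge 3$ are immediate, and the finitely many remaining small-diameter configurations (together with the degenerate situation in which $G^{---}$ is disconnected, which one excludes or treats by the usual convention) are checked directly.

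With these two facts in hand, for the three ``direct'' indices I would mimic Theorem~1 step by step: split each sum as $\sum_{u_i\in V(G)}(\cdots)+\sum_{u_j\in E(G)}(\cdots)$, then replace every factor $e_{G^{---}}(u)$ by $3$ (respectively $e_{G^{---}}(u)^2$ by $9$). This reduces the three bounds to $M^{1}_{ECI}(G^{---})\le 3\,M_1(G^{---})$, $ECI^{1}(G^{---})\le 9\sum_u\deg_{G^{---}}(u)=18\,|E(G^{---})|$, and $M^{1}_{ECI^{1}}(G^{---})\le 9\,M_1(G^{---})$. It remains to express $M_1(G^{---})$ and $|E(G^{---})|$ through invariants of $G$: expanding $\big((m+n-1)-2\deg_G(v_i)\big)^2$ and $\big((m+n-1)-\deg_G(v_i)-\deg_G(v_k)\big)^2$ and substituting $\sum_i\deg_G(v_i)=2m$, $\sum_{v_iv_k\in E(G)}(\deg_G(v_i)+\deg_G(v_k))=M_1(G)$ (formula~(2)), $\sum_{v_iv_k\in E(G)}(\deg_G(v_i)^2+\deg_G(v_k)^2)=F(G)$, and $\sum_{v_iv_k\in E(G)}\deg_G(v_i)\deg_G(v_k)=M_2(G)$ collapses everything to the asserted closed forms. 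Concretely one gets $M_1(G^{---})=(m+n)(m+n-1)^2-8m(m+n-1)-2(m+n-3)M_1(G)+F(G)+2M_2(G)$, from which the bounds for $M^{1}_{ECI}$ and $M^{1}_{ECI^{1}}$ follow by multiplying by $3$ and $9$, while the $ECI^{1}$ bound follows by reading off $|E(G^{---})|$.

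For the inverse index $I_{ECI}(G^{---})=\sum_u\big(e_{G^{---}}(u)\,\deg_{G^{---}}(u)\big)^{-1}$ the same splitting applies, but now the eccentricities must be bounded \emph{below} so that each $1/e_{G^{---}}(u)$ is bounded above, and afterwards one has to estimate the reciprocal degree sums $\sum_i\big((m+n-1)-2\deg_G(v_i)\big)^{-1}$ and $\sum_{v_iv_k\in E(G)}\big((m+n-1)-\deg_G(v_i)-\deg_G(v_k)\big)^{-1}$; the latter produces the term $\tfrac13\,M_1^{-1}(G)$, with $M_1^{-1}$ as defined in the proof of Theorem~1, and the former the $(m+n-1)^{-1}$-type terms. \textbf{The main obstacle} is precisely the eccentricity control: proving $diam(G^{---})\le 3$ cleanly for all admissible $G$, pinning down which small graphs must be excluded (for instance when $G^{---}$ is disconnected or has radius below $3$), and, for $I_{ECI}$, securing the matching lower bound on the radius and handling the reciprocal degree sums. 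Once that is settled, the remaining polynomial bookkeeping is routine and parallels Theorem~1.
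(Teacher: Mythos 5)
Your proposal follows essentially the same route as the paper's proof: the same split of $V(G^{---})$ into vertices coming from $V(G)$ and from $E(G)$, the same degree formulas $\deg_{G^{---}}(u_i)=m+n-1-2\deg_G(v_i)$ and $\deg_{G^{---}}(u_j)=m+n-1-(\deg_G(v_i)+\deg_G(v_k))$, the same eccentricity bound $e_{G^{---}}(u)\le 3$, and the same expansion into $F(G)$, $M_1(G)$, $M_2(G)$ and polynomial terms (your intermediate repackaging as $3M_1(G^{---})$, $18|E(G^{---})|$ and $9M_1(G^{---})$ is only a cosmetic regrouping of the very sums the paper writes out). The one point where you diverge is that you correctly note that for $I_{ECI}$ an upper bound on eccentricity controls the reciprocal terms from below rather than above, so a lower bound on $e_{G^{---}}$ would be needed; the paper's own proof does not address this and simply substitutes $e\le 3$ into the denominators, so the obstacle you flag is genuine but is not resolved in the paper either.
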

\begin{proof}
Let $G = (V,E)$ be a graph with $V(G) = \{v_{1}, v_{2}, v_{3}, \cdots, v_{n}\}$ and the edge set $E(G) = \{e_{1}, e_{2}, e_{3}, \cdots, e_{m}\}$. Then
$V(G^{---}) = \{v_{1}, v_{2}, v_{3}, \cdots, v_{n}, e_{1}, e_{2}, e_{3}, \cdots, e_{m}\}$. Clearly $|V(G^{---})|=m+n$. Further, $diam(G^{---}) \leq 3$. As for every $v \in V(G)$, $e_{G}(v) \leq diam(G)$, we have $e_{_{G^{---}}}(u) \leq 3$ for every $u \in G^{---}$. Let $u_{i}\in V(G^{---})$ be the corresponding vertex to $v_{i} \in V(G)$ and $u_{j} \in V(G^{---})$ be the corresponding vertex to $e_{j} \in E(G)$ in $G^{---}$. Then $deg_{_{G^{---}}}(u_{i}) = m+n-1-2 deg_{_{G}}(v_{i})$ and $deg_{_{G^{---}}}(u_{j}) =m+n-1-(deg_{_{G}}(v_{i})+ deg_{_{G}}(v_{j}))$ where $e_{j} = v_{i}v_{j}$. Therefore
\begin{description}
  \item[Case 1.]
  \begin{eqnarray*}
  I_{ECI}(G^{---}) &=& \sum\limits_{i=1}^{n} \frac{1}{e_{_{G^{---}}}(u)\cdot deg_{_{G^{---}}}(u)}\\
  &=& \sum\limits_{u_{i} \in V(G^{---}) \cap V(G)} \frac{1}{e_{_{G^{---}}}(u_{i})\cdot deg_{_{G^{---}}}(u_{i})} \\
  &+& \sum\limits_{u_{j} \in V(G^{---}) \cap E(G)} \frac{1}{e_{_{G^{---}}}(u_{j})\cdot deg_{_{G^{---}}}(u_{j})}
\end{eqnarray*}
Since $e_{_{G^{---}}}(u) \leq 3$, we deduce that
\begin{eqnarray*}
  I_{ECI}(G^{---}) &\leq&\sum\limits_{u_{i} \in V(G)} \frac{1}{\big[3\cdot (m+n-1-2(deg_{_{G}}(u_{i}))) \big]}\\
   &+& \sum\limits_{u_{j}, u_{k} \in E(G)}\frac{1}{\big[ 3\cdot (m+n-1-(deg_{_{G}}(u_{j}) +deg_{_{G}}(u_{k}))) \big] }\\
  &=& \frac{1}{3(m+n-1)n} - \frac{1}{6\sum\limits_{u_{i} \in V(G)}(deg_{_{G}}(u_{i})) }\\
  &+& \frac{1}{3(m+n-1)m} - \frac{1}{3\sum\limits_{u_{j}, u_{k} \in E(G)}(deg_{_{G}}(u_{j}) +deg_{_{G}}(u_{k}))} \\
  &\leq&\frac{1}{3(m+n-1)n} + \frac{1}{3(m+n-1)m}-\frac{1}{12m}-\frac{1}{3}M_{1}^{-1}(G).
\end{eqnarray*}

  \item[Case 2.] Similarly, we have
  \begin{eqnarray*}
  M_{ECI}^{1}(G) &=& \sum\limits_{i=1}^{n} (e(v_{i})deg(v_{i})^{2})\\
  &=& \sum\limits_{u_{i} \in V(G^{---}) \cap V(G)} e_{_{G^{---}}}(u_{i})\cdot deg_{_{G^{---}}}(u_{i})^{2}\\
   &+& \sum\limits_{u_{j} \in V(G^{---}) \cap E(G)} e_{_{G^{---}}}(u_{j})\cdot deg_{_{G^{---}}}(u_{j})^{2}.
\end{eqnarray*}
Since $e_{_{G^{---}}}(u) \leq 3$, we obtain
\begin{eqnarray*}
  M^{1}_{ECI}(G^{---}) &\leq& \sum\limits_{u_{i} \in V(G)} \big[3(m+n-1-2deg_{_{G}}(u_{i}))^{2} \big] \\
  &+& \sum\limits_{u_{j}, u_{k} \in E(G)}\big[ 3 (m+n-1-(deg_{_{G}}(u_{j}) +deg_{_{G}}(u_{k})))^{2} \big] \\
  &=& \sum\limits_{u_{i} \in V(G)} 3(m+n-1)^{2} + 12 \sum\limits_{u_{i} \in V(G)} deg_{_{G}}(u_{j}^{2}) -12(m+n-1) \sum\limits_{u_{i} \in V(G)} deg_{_{G}}(u_{j}) \\
  &+& \sum\limits_{u_{j}, u_{k} \in E(G)} 3(m+n-1)^{2} + 3F(G) +6M_{2}(G) -6(m+n-1)M_{1}(G)\\
  &=& 3F(G) +6M_{2}(G) -6(m+n-3)M_{1}(G)+3(m+n)(m+n-1)^{2} \\
  &-&24m(m+n-1).
\end{eqnarray*}

  \item[Case 3.]
  \begin{eqnarray*}
  ECI^{1}(G^{---}) &=& \sum\limits_{i=1}^{n} (e(v_{i})^{2}deg(v_{i}))\\
  &=& \sum\limits_{u_{i} \in V(G^{---}) \cap V(G)} e_{_{G^{---}}}(u_{i})^{2}\cdot deg_{_{G^{---}}}(u_{i})\\
   &+& \sum\limits_{u_{j} \in V(G^{---}) \cap E(G)} e_{_{G^{---}}}(u_{j})^{2}\cdot deg_{_{G^{---}}}(u_{j}).
\end{eqnarray*}
Since $e_{_{G^{---}}}(u) \leq 3$, we obtain
\begin{eqnarray*}
  ECI^{1}(G^{---}) &\leq& \sum\limits_{u_{i} \in V(G)} \big[3^{2}(m+n-1-2deg_{_{G}}(u_{i})) \big] \\
  &+& \sum\limits_{u_{j}, u_{k} \in E(G)}\big[ 3^{2} (m+n-(deg_{_{G}}(u_{j}) +deg_{_{G}}(u_{k}))) \big] \\
  &=& 9n(m+n-1) +9m(m+n) -36m -9M_{1}(G).
\end{eqnarray*}

  \item[Case 4.]
  \begin{eqnarray*}
  M^{1}_{ECI^{1}}(G^{---}) &=& \sum\limits_{i=1}^{n} (e(v_{i})^{2}deg(v_{i})^{2})\\
  &=& \sum\limits_{u_{i} \in V(G^{---}) \cap V(G)} e_{_{G^{---}}}(u_{i})^{2}\cdot deg_{_{G^{---}}}(u_{i})^{2}\\
   &+& \sum\limits_{u_{j} \in V(G^{---}) \cap E(G)} e_{_{G^{---}}}(u_{j})^{2}\cdot deg_{_{G^{---}}}(u_{j})^{2}.
\end{eqnarray*}
Since $e_{_{G^{---}}}(u) \leq 3$, we finally get
\begin{eqnarray*}
  ECI^{1}(G^{---}) &\leq& \sum\limits_{u_{i} \in V(G)} \big[3^{2}(m+n-1-2deg_{_{G}}(u_{i}))^{2} \big] \\
  &+& \sum\limits_{u_{j}, u_{k} \in E(G)}\big[ 3^{2}(m+n-1-(deg_{_{G}}(u_{j}) +deg_{_{G}}(u_{k})))^{2} \big] \\
  &=& 9F(G) + 18M_{2}(G) -18(m+n-3)M_{1}(G) \\
  &+&9(m+n)(m+n-1)^{2} -36m(m+n-1)
\end{eqnarray*}
\end{description}
as asserted.
\end{proof}

\begin{thm}
Let $G$ be an $(n,m)$ graph. Then
\begin{eqnarray*}
  I_{ECI}(G^{++-}) &\leq& \frac{1}{4mn} +\frac{1}{4m(n-4)} +\frac{1}{4}M_{1}^{-1}(G),\\
  M_{ECI}^{1}(G^{++-}) &\leq & 4F(G) +8M_{2}(G) + 8(n-4)M_{1}(G) + 4nm^{2} +4m(n-4)^{2},\\
  ECI^{1}(G^{++-})&\leq& 16M_{1}(G) +16m(n-4) + 16mn,\\
  M_{ECI^{1}}^{1}(G^{++-})&\leq& 16F(G) + 32M_{2}(G) + 16 m^{2}n + 16m(n-4).
\end{eqnarray*}
\end{thm}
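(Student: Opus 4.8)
I would follow the template of the proofs of Theorems 1 and 2: record the vertex set of $G^{++-}$ and the degree of each of its two kinds of vertices in terms of the parameters of $G$, bound every eccentricity of $G^{++-}$ by an absolute constant, and then split each of the four index sums into a part indexed by $V(G)$ and a part indexed by $E(G)$. Concretely, write $V(G)=\{v_1,\dots,v_n\}$ and $E(G)=\{e_1,\dots,e_m\}$, so $V(G^{++-})=V(G)\cup E(G)$ and $|V(G^{++-})|=m+n$. The string $++-$ means that two vertices of $G$ are adjacent in $G^{++-}$ iff adjacent in $G$, two edges of $G$ are adjacent in $G^{++-}$ iff adjacent in $G$, and a vertex and an edge are adjacent in $G^{++-}$ iff \emph{non-incident} in $G$. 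Hence the vertex $u_i$ of $G^{++-}$ coming from $v_i$ has degree $deg_{G^{++-}}(u_i)=deg_G(v_i)+(m-deg_G(v_i))=m$, while the vertex $u_j$ coming from $e_j=v_iv_k$ has $deg_G(v_i)+deg_G(v_k)-2$ neighbours among the edge-vertices and $n-2$ neighbours among the vertex-vertices, so $deg_{G^{++-}}(u_j)=deg_G(v_i)+deg_G(v_k)+n-4$. Finally, using the structural facts about transformation graphs in \cite{5,9,10}, one has $diam(G^{++-})\le 4$, hence $e_{G^{++-}}(u)\le 4$ for every $u\in V(G^{++-})$ (this uses that $G^{++-}$ is connected).

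\textbf{The four estimates.} For each index I would write its defining sum as a sum over the $n$ vertex-vertices plus a sum over the $m$ edge-vertices, substitute $deg_{G^{++-}}(u_i)=m$ and $deg_{G^{++-}}(u_j)=deg_G(v_i)+deg_G(v_k)+n-4$, and replace every factor $e_{G^{++-}}(u)$ by $4$. For $M^{1}_{ECI}$, $ECI^{1}$ and $M^{1}_{ECI^{1}}$ this yields the required upper bound at once, the eccentricity contributing a factor $4$ or $4^2=16$ according to the power of $e$ in the index. For $I_{ECI}$ one instead needs a lower bound on $e_{G^{++-}}(u)\,deg_{G^{++-}}(u)$, which I would extract exactly as in Case~1 of the proof of Theorem 2, using $e_{G^{++-}}(u)\le 4$ and pulling the degree sums into the denominator. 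It then remains, in the two quadratic indices, to expand $(deg_G(v_i)+deg_G(v_k)+n-4)^2$ and to recognise the edge-sums that appear: $\sum_{v_iv_k\in E(G)}(deg_G(v_i)^2+deg_G(v_k)^2)=F(G)$, $\sum_{v_iv_k\in E(G)}deg_G(v_i)\,deg_G(v_k)=M_2(G)$, $\sum_{v_iv_k\in E(G)}(deg_G(v_i)+deg_G(v_k))=M_1(G)$ (the edge form of the first Zagreb index), and, for $I_{ECI}$, $\sum_{v_iv_k\in E(G)}(deg_G(v_i)+deg_G(v_k))^{-1}=M_1^{-1}(G)$. The remaining, purely numerical, contributions coming from the constant vertex-degree $m$ produce the terms in $nm^2$, $m(n-4)^2$, $mn$ and $m(n-4)$; collecting everything yields the four displayed bounds.

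\textbf{Where the difficulty lies.} Almost all of this is bookkeeping, but two points need genuine care. First, the degree formula $deg_{G^{++-}}(u_j)=deg_G(v_i)+deg_G(v_k)+n-4$ for edge-vertices: the constant $n-4$ combines the $-2$ from line-graph adjacency with the $n-2$ non-incident vertices, and it is easy to mis-count. Second --- and this is the only structural rather than computational ingredient --- the eccentricity estimate $e_{G^{++-}}(u)\le 4$; moreover, for $I_{ECI}$ it must be used in the \emph{opposite} direction to the other three cases (as a lower bound on $e_{G^{++-}}(u)\,deg_{G^{++-}}(u)$), so that case should be organised in parallel with Case~1 of the proof of Theorem 2 rather than with the other cases here.
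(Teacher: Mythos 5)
Your proposal matches the paper's proof essentially step for step: the paper uses the same degree formulas $deg_{G^{++-}}(u_i)=m$ and $deg_{G^{++-}}(u_j)=n-4+deg_{G}(v_i)+deg_{G}(v_k)$, the same bound $diam(G^{++-})\le 4$ giving $e_{G^{++-}}(u)\le 4$, and the same split of each index into a sum over vertex-vertices and a sum over edge-vertices, with the expanded squares recognised as $F(G)$, $M_2(G)$, $M_1(G)$ and $M_1^{-1}(G)$. Even the $I_{ECI}$ case, where you flag that the eccentricity estimate acts in the opposite direction, is treated in the paper exactly as you propose to treat it --- by substituting $e_{G^{++-}}(u)\le 4$ and pulling the degree sums into the denominators, in parallel with Case 1 of its $G^{---}$ theorem --- so there is no divergence in approach.
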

\begin{proof}
Let $G = (V,E)$ be a graph with $V(G) = \{v_{1}, v_{2}, v_{3}, \cdots, v_{n}\}$ and the edge set $E(G) = \{e_{1}, e_{2}, e_{3}, \cdots, e_{m}\}$. Then $V(G^{++-}) = \{v_{1}, v_{2}, v_{3}, \cdots, v_{n}, e_{1}, e_{2}, e_{3}, \cdots, e_{m}\}$. Clearly $|V(G^{++-})|=m+n$. Further $diam(G^{++-}) \leq 4$. Since $e_{G}(v) \leq diam(G)$ for every $v \in V(G)$ we have $e_{_{G^{++-}}}(u) \leq 4$ for every $u \in G^{++-}$. Let $u_{i} \in V(G^{++-})$ be the corresponding vertex to $v_{i} \in V(G)$ and $u_{j} \in V(G^{++-})$ be the corresponding vertex to $e_{j} \in E(G)$ in $G^{++-}$. Then $deg_{_{G^{++-}}}(u_{i}) = m$ and $deg_{_{G^{++-}}}(u_{j}) =n-4+(deg_{_{G}}(v_{i})+ deg_{_{G}}(v_{j}))$ where $e_{j} = v_{i}v_{j}$. Therefore we discuss the following cases:
\begin{description}
  \item[Case 1.]
  \begin{eqnarray*}
  I_{ECI}(G^{++-}) &=& \sum\limits_{i=1}^{n} \frac{1}{e_{_{G^{++-}}}(u)\cdot deg_{_{G^{++-}}}(u)}\\
  &=& \sum\limits_{u_{i} \in V(G^{++-}) \cap V(G)} \frac{1}{e_{_{G^{++-}}}(u_{i})\cdot deg_{_{G^{++-}}}(u_{i})} \\
  &+& \sum\limits_{u_{j} \in V(G^{++-}) \cap E(G)} \frac{1}{e_{_{G^{++-}}}(u_{j})\cdot deg_{_{G^{++-}}}(u_{j})}.
\end{eqnarray*}
Since $e_{_{G^{++-}}}(u) \leq 4$, we have
\begin{eqnarray*}
I_{ECI}(G^{++-}) &\leq&\sum\limits_{u_{i} \in V(G)} \frac{1}{\big[4m \big]}+ \sum\limits_{u_{j}, u_{k} \in E(G)}\frac{1}{\big[ 4\cdot (n-4+(deg_{_{G}}(u_{j}) +deg_{_{G}}(u_{k}))) \big]} \\
&\leq& \frac{1}{4mn} +\frac{1}{4m(n-4)} +\frac{1}{4}M_{1}^{-1}(G).
\end{eqnarray*}

\item[Case 2.]
\begin{eqnarray*}
M_{ECI}^{1}(G) &=& \sum\limits_{i=1}^{n} (e(v_{i})deg(v_{i})^{2})\\
&=& \sum\limits_{u_{i} \in V(G^{++-}) \cap V(G)} e_{_{G^{++-}}}(u_{i})\cdot deg_{_{G^{++-}}}(u_{i})^{2}\\
&+& \sum\limits_{u_{j} \in V(G^{++-}) \cap E(G)} e_{_{G^{++-}}}(u_{j})\cdot deg_{_{G^{++-}}}(u_{j})^{2}.
\end{eqnarray*}
Since $e_{_{G^{++-}}}(u) \leq 4$, we obtain
\begin{eqnarray*}
  M^{1}_{ECI}(G^{++-}) &\leq& \sum\limits_{u_{i} \in V(G)} \big[4m^{2}] \\
  &+& \sum\limits_{u_{j}, u_{k} \in E(G)}4\big[(n-4)+(deg_{_{G}}(u_{j}) +deg_{_{G}}(u_{k}))^{2} \big] \\
  &=& 4 nm^{2} + 4m(n-4)^{2}+ 4\sum\limits_{u_{i} \in V(G)} (deg_{_{G}}(u_{j}) +deg_{_{G}}(u_{k}))^{2} + 8(n-4)M_{1}(G)\\
  &=& 4F(G) +8M_{2}(G) + 8(n-4)M_{1}(G) + 4nm^{2} +4m(n-4)^{2}. 
\end{eqnarray*}

  \item[Case 3.]
  \begin{eqnarray*}
  ECI^{1}(G) &=& \sum\limits_{i=1}^{n} e(v_{i})^{2}deg(v_{i})\\
  &=& \sum\limits_{u_{i} \in V(G^{++-}) \cap V(G)} e_{_{G^{++-}}}(u_{i})^{2}\cdot deg_{_{G^{++-}}}(u_{i})\\
   &+& \sum\limits_{u_{j} \in V(G^{++-}) \cap E(G)} e_{_{G^{++-}}}(u_{j})^{2}\cdot deg_{_{G^{++-}}}(u_{j}).
\end{eqnarray*}
Since $e_{_{G^{++-}}}(u) \leq 4$, we deduce that
\begin{eqnarray*}
  ECI^{1}(G^{++-}) &\leq& \sum\limits_{u_{i} \in V(G)} 16m\\
  &+& \sum\limits_{u_{j}, u_{k} \in E(G)}\big[ 4^{2}(n-4+deg_{_{G}}(u_{j}) +deg_{_{G}}(u_{k}))) \big] \\
&=& 16M_{1}(G) +16m(n-4) + 16mn.
\end{eqnarray*}
\item[Case 4.]
\begin{eqnarray*}
  M^{1}_{ECI^{1}}(G) &=& \sum\limits_{i=1}^{n} (e(v_{i})^{2}deg(v_{i})^{2})\\
  &=& \sum\limits_{u_{i} \in V(G^{++-}) \cap V(G)} e_{_{G^{++-}}}(u_{i})^{2}\cdot deg_{_{G^{++-}}}(u_{i})^{2}\\
   &+& \sum\limits_{u_{j} \in V(G^{++-}) \cap E(G)} e_{_{G^{++-}}}(u_{j})^{2}\cdot deg_{_{G^{++-}}}(u_{j})^{2}.
\end{eqnarray*}
Since $e_{_{G^{++-}}}(u) \leq 4$, we finally have
\begin{eqnarray*}
  ECI^{1}(G^{++-}) &\leq& \sum\limits_{u_{i} \in V(G)} \big[4^{2}(m)^{2} \big] \\
  &+& \sum\limits_{u_{j}, u_{k} \in E(G)}\big[ 4^{2}(n-4+(deg_{_{G}}(u_{j}) +deg_{_{G}}(u_{k})))^{2} \big] \\
  &=& 16F(G) + 32M_{2}(G) + 16 m^{2}n + 16m(n-4)
\end{eqnarray*}
\end{description}
as asserted.
\end{proof}

\begin{thm}
Let $G$ be an $(n,m)$ graph. Then
\begin{eqnarray*}
  I_{ECI}(G^{--+}) &\leq& \frac{1}{3n(n+1)} -\frac{1}{6m}+ \frac{1}{3m(m+3)} -\frac{1}{3}M_{1}^{-1}(G),\\
  M_{ECI}^{1}(G^{--+}) &\leq & 3F(G) + 6M_{2}(G) - (6m + 15)M_{1}(G) + 3n(n+1)^{2}\\
  &-&12m(n+1)+ 3m(m+3)^{2}, \\
  ECI^{1}(G^{--+})&\leq& 9n(n+1) +9m(m+3)-18m -9M_{1}(G),\\
  M_{ECI^{1}}^{1}(G^{--+})&\leq&  9F(G) + 18M_{2}(G) - (18m+45)M_{1}(G) + 9n(n+1)^{2} \\
  &+&9m(m+3)^{2} -36m(n+1).
\end{eqnarray*}
\end{thm}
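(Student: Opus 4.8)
The plan is to follow, for $G^{--+}$, exactly the scheme used in the proofs of Theorems~1--3. Label the vertices of $G^{--+}$ as $u_{1},\dots ,u_{n},u_{n+1},\dots ,u_{n+m}$, where $u_{i}$ with $1\le i\le n$ is the vertex arising from $v_{i}\in V(G)$ and $u_{j}$ with $n<j\le n+m$ is the vertex arising from the edge $e_{j}\in E(G)$; thus $|V(G^{--+})|=m+n$. The first step is to pin down the eccentricities: one checks that $\operatorname{diam}(G^{--+})\le 3$, so that $e_{_{G^{--+}}}(u)\le 3$ for every $u$, which is where the constants $3$ and $9=3^{2}$ in all four bounds come from. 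The second step is to record the degrees: combining the $x=-$ adjacency among vertices of $G$ with the $z=+$ incidence gives $\deg_{_{G^{--+}}}(u_{i})=n+1-\deg_{_{G}}(v_{i})$, and combining the $y=-$ adjacency among edges of $G$ with the $z=+$ incidence gives $\deg_{_{G^{--+}}}(u_{j})=m+3-\bigl(\deg_{_{G}}(v_{i})+\deg_{_{G}}(v_{k})\bigr)$ whenever $e_{j}=v_{i}v_{k}$.

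For each of the four invariants I would split its defining sum over $V(G^{--+})$ into the part running over the vertex-vertices $u_{i}$ and the part running over the edge-vertices $u_{j}$, then replace every factor $e_{_{G^{--+}}}(u)$ by $3$ and every factor $e_{_{G^{--+}}}(u)^{2}$ by $9$, and finally substitute the two degree formulas. In the vertex part, expanding $\bigl(n+1-\deg_{_{G}}(v_{i})\bigr)^{p}$ for $p=1,2$ and summing over $i$ produces only an explicit function of $n$, a multiple of $\sum_{i}\deg_{_{G}}(v_{i})=2m$ and a multiple of $M_{1}(G)=\sum_{i}\deg_{_{G}}(v_{i})^{2}$. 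In the edge part, expanding $\bigl(m+3-(\deg_{_{G}}(v_{i})+\deg_{_{G}}(v_{k}))\bigr)^{p}$ over $u_{j}u_{k}\in E(G)$ produces multiples of $m$, of $\sum_{uv\in E(G)}(\deg u+\deg v)=M_{1}(G)$, and, from the squared term, of $\sum_{uv\in E(G)}(\deg u+\deg v)^{2}=F(G)+2M_{2}(G)$. Collecting like terms and doing the coefficient arithmetic then gives directly the asserted bounds for $M_{ECI}^{1}(G^{--+})$, $ECI^{1}(G^{--+})$ and $M_{ECI^{1}}^{1}(G^{--+})$, and the bound for $I_{ECI}(G^{--+})$ after the estimate below.

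The one case needing genuine care is $I_{ECI}(G^{--+})$, because the degree expressions now sit in denominators: its vertex part is $\tfrac13\sum_{u_{i}}\bigl(n+1-\deg_{_{G}}(v_{i})\bigr)^{-1}$ and its edge part is $\tfrac13\sum_{u_{j}u_{k}}\bigl(m+3-(\deg_{_{G}}(v_{i})+\deg_{_{G}}(v_{k}))\bigr)^{-1}$, neither of which is on the nose a multiple of an already-defined index. The remedy is the same one used in Case~1 of Theorems~2 and~3: one estimates each reciprocal by separating the constant part of the denominator from the degree part, so that the leading pieces contribute the terms $\tfrac{1}{3n(n+1)}$ and $\tfrac{1}{3m(m+3)}$, while the remaining pieces are bounded in terms of $\sum_{i}\deg_{_{G}}(v_{i})=2m$ (the $-\tfrac{1}{6m}$ term) and of $M_{1}^{-1}(G)=\sum_{uv\in E(G)}(\deg u+\deg v)^{-1}$ (the $-\tfrac13 M_{1}^{-1}(G)$ term). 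I expect the main obstacle to be not conceptual but purely clerical: keeping track of the signs and numerical coefficients through this peeling and through the expansions in the other three cases. Once the eccentricity bound and the two degree formulas are established, the rest is routine and mirrors the proofs of the preceding theorems.
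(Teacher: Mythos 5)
Your proposal follows the paper's own proof essentially verbatim: the same eccentricity bound $e_{_{G^{--+}}}(u)\le 3$, the same degree formulas $n+1-\deg_{G}(v_i)$ for vertex-vertices and $m+3-(\deg_{G}(v_i)+\deg_{G}(v_k))$ for edge-vertices, the same split of each sum into its vertex and edge parts, and the same expansion and collection of terms into $M_1(G)$, $M_2(G)$, $F(G)$ and the explicit polynomial terms. Even your handling of $I_{ECI}$ — peeling the constant part of the denominator from the degree part and bounding the pieces by $\frac{1}{3n(n+1)}$, $\frac{1}{3m(m+3)}$, $-\frac{1}{6m}$ and $-\frac{1}{3}M_{1}^{-1}(G)$ — is exactly the estimate the paper performs in its Case 1, so the two arguments coincide.
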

\begin{proof}
Let $G = (V,E)$ be a graph with $V(G) = \{v_{1}, v_{2}, v_{3}, \cdots, v_{n}\}$ and the edge set $E(G) = \{e_{1}, e_{2}, e_{3}, \cdots, e_{m}\}$. Then
$V(G^{--+}) = \{v_{1}, v_{2}, v_{3}, \cdots, v_{n}, e_{1}, e_{2}, e_{3}, \cdots, e_{m}\}$. Clearly $|V(G^{--+})|=m+n$ and $diam(G^{--+}) \leq 3$. Since, for every $v \in V(G)$, $e_{G}(v) \leq diam(G)$, we get $e_{_{G^{--+}}}(u) \leq 3$ for every $u \in G^{--+}$. Let $u_{i}\in V(G^{--+})$ be the corresponding vertex to $v_{i} \in V(G)$ and $u_{j} \in V(G^{--+})$ be the corresponding vertex to $e_{j} \in E(G)$ in $G^{--+}$. Then $deg_{_{G^{--+}}}(u_{i}) = n+1- deg_{_{G}}(v_{i})$ and $deg_{_{G^{--+}}}(u_{j}) =m+3-(deg_{_{G}}(v_{i})+ deg_{_{G}}(v_{j}))$ where $e_{j} = v_{i}v_{j}$. Therefore
\begin{description}
  \item[Case 1.]
 \begin{eqnarray*}
  ECI(G^{--+}) &=& \sum\limits_{i=1}^{n} \frac{1}{e_{_{G^{--+}}}(u)\cdot deg_{_{G^{--+}}}(u)}\\
  &=& \sum\limits_{u_{i} \in V(G^{--+}) \cap V(G)} \frac{1}{e_{_{G^{--+}}}(u_{i})\cdot deg_{_{G^{--+}}}(u_{i})}\\
   &+& \sum\limits_{u_{j} \in V(G^{--+}) \cap E(G)} \frac{1}{e_{_{G^{--+}}}(u_{j})\cdot deg_{_{G^{--+}}}(u_{j})}.
\end{eqnarray*}
Since $e_{_{G^{--+}}}(u) \leq 3$, we get
\begin{eqnarray*}
  ECI(G^{--+}) &\leq&\sum\limits_{u_{i} \in V(G)} \frac{1}{\big[3\cdot (n+1-(deg_{_{G}}(u_{i}))) \big]}\\
   &+& \sum\limits_{u_{j}, u_{k} \in E(G)}\frac{1}{\big[ 3\cdot (m+3-(deg_{_{G}}(u_{j}) +deg_{_{G}}(u_{k}))) \big]} \\
   &\leq& \frac{1}{3n(n+1)} -\frac{1}{6m}+ \frac{1}{3m(m+3)} -\frac{1}{3}M_{1}^{-1}(G).
\end{eqnarray*}

  \item[Case 2.]
  \begin{eqnarray*}
  M_{ECI}^{1}(G) &=& \sum\limits_{i=1}^{n} (e(v_{i})deg(v_{i})^{2})\\
  &=& \sum\limits_{u_{i} \in V(G^{--+}) \cap V(G)} e_{_{G^{--+}}}(u_{i})\cdot deg_{_{G^{--+}}}(u_{i})^{2}\\
   &+& \sum\limits_{u_{j} \in V(G^{--+}) \cap E(G)} e_{_{G^{--+}}}(u_{j})\cdot deg_{_{G^{--+}}}(u_{j})^{2}.
\end{eqnarray*}
Since $e_{_{G^{--+}}}(u) \leq 3$, we deduce that
\begin{eqnarray*}
  M^{1}_{ECI}(G^{--+}) &\leq& \sum\limits_{u_{i} \in V(G)} \big[3(n+1-deg_{_{G}}(u_{i}))^{2} \big] \\
  &+& \sum\limits_{u_{j}, u_{k} \in E(G)}\big[ 3 (m+3-(deg_{_{G}}(u_{j}) +deg_{_{G}}(u_{k})))^{2} \big] \\
  &=& 3n(n+1)^{2} + 3M_{1}(G) -6(n+1)(2m) + 3m(m+3)^{2} + 3F(G) \\
  &+& 6M_{2}(G) - 6(m+3)M_{1}(G)\\
  &=& 3F(G) + 6M_{2}(G) - (6m + 15)M_{1}(G) + 3n(n+1)^{2}\\
  &-&12m(n+1)+ 3m(m+3)^{2}.
\end{eqnarray*}

  \item[Case 3.]
  \begin{eqnarray*}
  ECI^{1}(G) &=& \sum\limits_{i=1}^{n} (e(v_{i})^{2}deg(v_{i}))\\
  &=& \sum\limits_{u_{i} \in V(G^{--+}) \cap V(G)} e_{_{G^{--+}}}(u_{i})^{2}\cdot deg_{_{G^{--+}}}(u_{i})\\
   &+& \sum\limits_{u_{j} \in V(G^{--+}) \cap E(G)} e_{_{G^{--+}}}(u_{j})^{2}\cdot deg_{_{G^{--+}}}(u_{j}).
\end{eqnarray*}
Since $e_{_{G^{--+}}}(u) \leq 3$, we obtain
\begin{eqnarray*}
  ECI^{1}(G^{--+}) &\leq& \sum\limits_{u_{i} \in V(G)} \big[3^{2}(n+1-deg_{G}(u_{i})) \big] \\
  &+& \sum\limits_{u_{j}, u_{k} \in E(G)}\big[ 3^{2} (m+3-(deg_{_{G}}(u_{j}) +deg_{_{G}}(u_{k}))) \big] \\
  &=& 9n(n+1) +9m(m+3)-18m -9M_{1}(G). 
\end{eqnarray*}

  \item[Case 4.] Finally we get
  \begin{eqnarray*}
  M^{1}_{ECI^{1}}(G) &=& \sum\limits_{i=1}^{n} (e(v_{i})^{2}deg(v_{i})^{2})\\
  &=& \sum\limits_{u_{i} \in V(G^{--+}) \cap V(G)} e_{_{G^{--+}}}(u_{i})^{2}\cdot deg_{_{G^{--+}}}(u_{i})^{2}\\
   &+& \sum\limits_{u_{j} \in V(G^{--+}) \cap E(G)} e_{_{G^{--+}}}(u_{j})^{2}\cdot deg_{_{G^{--+}}}(u_{j})^{2}.
\end{eqnarray*}
As $e_{_{G^{--+}}}(u) \leq 3$, we finally obtain that
\begin{eqnarray*}
  ECI^{1}(G^{--+}) &\leq& \sum\limits_{u_{i} \in V(G)} \big[3^{2}(n+1-deg_{_{G}}(u_{i}))^{2} \big] \\
  &+& \sum\limits_{u_{j}, u_{k} \in E(G)}\big[ 3^{2}(m+3-(deg_{_{G}}(u_{j}) +deg_{_{G}}(u_{k})))^{2} \big] \\
  &=& 9F(G) + 18M_{2}(G) - (18m+45)M_{1}(G) + 9n(n+1)^{2} \\
  &+&9m(m+3)^{2} -36m(n+1)
\end{eqnarray*}
\end{description}
as asserted.
\end{proof}

\begin{thm}
Let $G$ be an $(n,m)$ graph. Then
\begin{eqnarray*}
  I_{ECI}(G^{+-+}) &\leq& \frac{1}{12m}+ \frac{1}{3m(m+3)} -\frac{1}{3}M_{1}^{-1}(G),\\
  M_{ECI}^{1}(G^{+-+}) &\leq & 3F(G) + 6M_{2}(G) -6(m-3)M_{1}(G) + 3m(m+3)^{2},\\
  ECI^{1}(G^{+-+})&\leq& 36m + 9m(m+3) - 9M_{1}(G),\\
  M_{ECI^{1}}^{1}(G^{+-+})&\leq& 9F(G)+18M_{2}(G) -18(m-1)M_{1}(G) +9m(m+3)^{2}.
\end{eqnarray*}
\end{thm}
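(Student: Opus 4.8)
The plan is to run the same argument used for $G^{+++},G^{---},G^{++-}$ and $G^{--+}$ in Theorems 1--4. Write $V(G)=\{v_{1},\dots,v_{n}\}$, $E(G)=\{e_{1},\dots,e_{m}\}$, let $u_{i}\in V(G^{+-+})$ correspond to $v_{i}$ and $u_{j}\in V(G^{+-+})$ to $e_{j}$; then $V(G^{+-+})=V(G)\cup E(G)$ has $m+n$ vertices. From the three rules defining $G^{+-+}$ (adjacency inside $V(G)$ kept since $x=+$, adjacency inside $E(G)$ complemented since $y=-$, incidence kept since $z=+$) one reads off the degrees: a vertex $v_{i}$ keeps its $deg_{G}(v_{i})$ neighbours in $V(G)$ and acquires its $deg_{G}(v_{i})$ incident edges, so $deg_{G^{+-+}}(u_{i})=2\,deg_{G}(v_{i})$; an edge $e_{j}=v_{i}v_{k}$ is joined to the $2$ vertices incident with it and to the $(m-1)-(deg_{G}(v_{i})+deg_{G}(v_{k})-2)$ edges of $G$ not adjacent to it, so $deg_{G^{+-+}}(u_{j})=m+3-(deg_{G}(v_{i})+deg_{G}(v_{k}))$. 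These are precisely the degrees implicit in the statement (the constant $m+3$ and the $v$--type degree $2\,deg_{G}$ are what produce the $36m$, $3m(m+3)^{2}$, etc.\ appearing there).

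The one structural input, and the step I expect to be the only non-routine one, is the diameter bound $diam(G^{+-+})\le 3$, which yields $e_{G^{+-+}}(u)\le e_{G^{+-+}}(u)\le diam(G^{+-+})\le 3$ for every $u\in V(G^{+-+})$; as in Theorems 1--4 one tacitly restricts to graphs $G$ for which the relevant transformation graph is connected (e.g.\ $G$ without isolated vertices). I would verify it by a short case check on the types of the two endpoints $\alpha,\beta$: two vertices of $V(G)$ are either adjacent in $G$ (distance $1$) or joined by a chain $v_{i}\!-\!e\!-\!e'\!-\!v_{j}$ through an edge $e\ni v_{i}$ and an edge $e'\ni v_{j}$ that are non-adjacent in $G$ (distance $\le 3$); a vertex and an edge, or two edges, are treated the same way and give distance $\le 3$ as well.

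With these two facts the four inequalities follow by the bookkeeping of Cases 1--4 of the earlier theorems. For each of $M_{ECI}^{1},ECI^{1},M_{ECI^1}^{1}$ one writes the defining sum as $\sum_{u_{i}\in V(G^{+-+})\cap V(G)}(\cdots)+\sum_{u_{j}\in V(G^{+-+})\cap E(G)}(\cdots)$, replaces $e_{G^{+-+}}(u)$ by $3$, substitutes $deg_{G^{+-+}}(u_{i})=2\,deg_{G}(v_{i})$ and $deg_{G^{+-+}}(u_{j})=m+3-(deg_{G}(v_{i})+deg_{G}(v_{k}))$, and expands. The $v$--type sum, together with $\sum_{v}deg_{G}(v)=2m$ and $\sum_{v}deg_{G}(v)^{2}=M_{1}(G)$, produces the pure $m$--terms and part of the $M_{1}(G)$--term; expanding $(m+3-(deg_{G}(v_{i})+deg_{G}(v_{k})))^{2}=(m+3)^{2}-2(m+3)(deg_{G}(v_{i})+deg_{G}(v_{k}))+(deg_{G}(v_{i})+deg_{G}(v_{k}))^{2}$ and summing over $E(G)$ contributes $m(m+3)^{2}$, the remaining $M_{1}(G)$--term via $\sum_{v_{i}v_{k}\in E(G)}(deg_{G}(v_{i})+deg_{G}(v_{k}))=M_{1}(G)$, and $F(G)+2M_{2}(G)$ via $\sum_{v_{i}v_{k}\in E(G)}(deg_{G}(v_{i})+deg_{G}(v_{k}))^{2}=F(G)+2M_{2}(G)$; collecting the terms gives the asserted bounds. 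The $I_{ECI}$ bound is obtained in the same way, the extra ingredient being elementary estimates for the reciprocals $\frac{1}{2\,deg_{G}(v_{i})}$ and $\frac{1}{m+3-(deg_{G}(v_{i})+deg_{G}(v_{k}))}$ (using again $\sum_{v}deg_{G}(v)=2m$), which split off the $\frac{1}{12m}$ and $\frac{1}{3m(m+3)}$ terms and leave $-\frac{1}{3}M_{1}^{-1}(G)$. Apart from the diameter estimate, everything is routine term-chasing identical in shape to the proofs already given.
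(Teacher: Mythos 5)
Your proposal follows essentially the same route as the paper's proof: the same degree formulas $deg_{G^{+-+}}(u_{i})=2\,deg_{G}(v_{i})$ and $deg_{G^{+-+}}(u_{j})=m+3-(deg_{G}(v_{i})+deg_{G}(v_{k}))$, the same eccentricity bound $e_{G^{+-+}}(u)\leq 3$ (which you additionally justify by a case check, whereas the paper simply asserts $diam(G^{+-+})\leq 3$), and the same vertex-part/edge-part expansion using $\sum_{v}deg_{G}(v)=2m$, $\sum_{v}deg_{G}(v)^{2}=M_{1}(G)$ and $\sum_{v_{i}v_{k}\in E(G)}(deg_{G}(v_{i})+deg_{G}(v_{k}))^{2}=F(G)+2M_{2}(G)$. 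The only point you leave vague, the reciprocal estimates in the $I_{ECI}$ case, is handled no more rigorously in the paper itself, so your treatment matches the published argument.
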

\begin{proof}
Let $G = (V,E)$ be a graph with $V(G) = \{v_{1}, v_{2}, v_{3}, \cdots, v_{n}\}$ and $E(G) = \{e_{1}, e_{2}, e_{3}, \cdots, e_{m}\}$. Then
$V(G^{+-+}) = \{v_{1}, v_{2}, v_{3}, \cdots, v_{n}, e_{1}, e_{2}, e_{3}, \cdots, e_{m}\}$. Clearly $|V(G^{+-+})|=m+n$ and $diam(G^{+-+}) \leq 3$. As $e_{G}(v) \leq diam(G)$ for every $v \in V(G)$, we get $e_{_{G^{+-+}}}(u) \leq 3$ for every $u \in G^{+-+}$. Let $u_{i}\in V(G^{+-+})$ be the corresponding vertex to $v_{i} \in V(G)$ and $u_{j} \in V(G^{+-+})$ be the corresponding vertex to $e_{j} \in E(G)$ in $G^{+-+}$. Then $deg_{_{G^{+-+}}}(u_{i}) = 2deg_{_{G}}(v_{i})$ and $deg_{_{G^{+-+}}}(u_{j}) =m+3-(deg_{_{G}}(v_{i})+ deg_{_{G}}(v_{j}))$ where $e_{j} = v_{i}v_{j}$. Therefore
\begin{description}
  \item[Case 1.]
  \begin{eqnarray*}
  I_{ECI}(G^{+-+}) &=& \sum\limits_{i=1}^{n} \frac{1}{e_{_{G^{+-+}}}(u)\cdot deg_{_{G^{+-+}}}(u)}\\
  &=& \sum\limits_{u_{i} \in V(G^{+-+}) \cap V(G)} \frac{1}{e_{_{G^{+-+}}}(u_{i})\cdot deg_{_{G^{+-+}}}(u_{i})} \\
  &+& \sum\limits_{u_{j} \in V(G^{+-+}) \cap E(G)} \frac{1}{e_{_{G^{+-+}}}(u_{j})\cdot deg_{_{G^{+-+}}}(u_{j})}
\end{eqnarray*}
As $e_{_{G^{+-+}}}(u) \leq 3$, we obtain
\begin{eqnarray*}
  I_{ECI}(G^{+-+}) &\leq&\sum\limits_{u_{i} \in V(G)} \frac{1}{\big[3\cdot (2(deg_{_{G}}(u_{i}))) \big]}\\
   &+& \sum\limits_{u_{j}, u_{k} \in E(G)}\frac{1}{\big[ 3\cdot (m+3-(deg_{_{G}}(u_{j}) +deg_{_{G}}(u_{k}))) \big]} \\
   &\leq& \frac{1}{12m}+ \frac{1}{3m(m+3)} -\frac{1}{3}M_{1}^{-1}(G).
\end{eqnarray*}

  \item[Case 2.]
  \begin{eqnarray*}
  M_{ECI}^{1}(G) &=& \sum\limits_{i=1}^{n} (e(v_{i})deg(v_{i})^{2})\\
  &=& \sum\limits_{u_{i} \in V(G^{+-+}) \cap V(G)} e_{_{G^{+-+}}}(u_{i})\cdot deg_{_{G^{+-+}}}(u_{i})^{2}\\
   &+& \sum\limits_{u_{j} \in V(G^{+-+}) \cap E(G)} e_{_{G^{+-+}}}(u_{j})\cdot deg_{_{G^{+-+}}}(u_{j})^{2}.
\end{eqnarray*}
As $e_{_{G^{+-+}}}(u) \leq 3$, we obtain
\begin{eqnarray*}
  M^{1}_{ECI}(G^{+-+}) &\leq& \sum\limits_{u_{i} \in V(G)} \big[3(2deg_{_{G}}(u_{i}))^{2} \big] \\
  &+& \sum\limits_{u_{j}, u_{k} \in E(G)}\big[ 3(m+3-(deg_{_{G}}(u_{j}) +deg_{_{G}}(u_{k})))^{2} \big] \\
  &=& 12M_{1}(G) +3m(m+3)^{2} + 3F(G) +6M_{2}(G) - 6(m+3)M_{1}(G) \\
  &=& 3F(G) + 6M_{2}(G) -6(m-3)M_{1}(G) + 3m(m+3)^{2}.
\end{eqnarray*}

  \item[Case 3.]
  \begin{eqnarray*}
  ECI^{1}(G) &=& \sum\limits_{i=1}^{n} (e(v_{i})^{2}deg(v_{i}))\\
  &=& \sum\limits_{u_{i} \in V(G^{+-+}) \cap V(G)} e_{_{G^{+-+}}}(u_{i})^{2}\cdot deg_{_{G^{+-+}}}(u_{i})\\
   &+& \sum\limits_{u_{j} \in V(G^{+-+}) \cap E(G)} e_{_{G^{+-+}}}(u_{j})^{2}\cdot deg_{_{G^{+-+}}}(u_{j}).
\end{eqnarray*}
Since $e_{_{G^{+-+}}}(u) \leq 3$, we deduce that
\begin{eqnarray*}
  ECI^{1}(G^{+-+}) &\leq& \sum\limits_{u_{i} \in V(G)} \big[3^{2}(2deg_{_{G}}(u_{i})) \big] \\
  &+& \sum\limits_{u_{j}, u_{k} \in E(G)}\big[ 3^{2}(m+3-(deg_{_{G}}(u_{j}) +deg_{_{G}}(u_{k}))) \big] \\
&=& 36m + 9m(m+3) - 9M_{1}(G).
\end{eqnarray*}

  \item[Case 4.] Finally, we have
  \begin{eqnarray*}
  M^{1}_{ECI^{1}}(G) &=& \sum\limits_{i=1}^{n} (e(v_{i})^{2}deg(v_{i})^{2})\\
  &=& \sum\limits_{u_{i} \in V(G^{+-+}) \cap V(G)} e_{_{G^{+-+}}}(u_{i})^{2}\cdot deg_{_{G^{+-+}}}(u_{i})^{2}\\
   &+& \sum\limits_{u_{j} \in V(G^{+-+}) \cap E(G)} e_{_{G^{+-+}}}(u_{j})^{2}\cdot deg_{_{G^{+-+}}}(u_{j})^{2}.
\end{eqnarray*}
As $e_{_{G^{+-+}}}(u) \leq 3$, we deduce that
\begin{eqnarray*}
  ECI^{1}(G^{+-+}) &\leq& \sum\limits_{u_{i} \in V(G)} \big[3^{2}(2deg_{_{G}}(u_{i}))^{2} \big] \\
  &+& \sum\limits_{u_{j}, u_{k} \in E(G)}\big[ 3^{2}(m+3-(deg_{_{G}}(u_{j}) +deg_{_{G}}(u_{k})))^{2} \big] \\
  &=& 9F(G)+18M_{2}(G) -18(m-1)M_{1}(G) +9m(m+3)^{2}.
\end{eqnarray*}
\end{description}
\end{proof}

\begin{thm}
Let $G$ be an $(n,m)$ graph. Then
\begin{eqnarray*}
  I_{ECI}(G^{-+-}) &\leq& \frac{1}{3n(m+n-1)}+\frac{1}{3m(n-4)}-\frac{1}{12m}+\frac{1}{3}M_{1}^{-1}(G),\\
  M_{ECI}^{1}(G^{-+-}) &\leq & 3F(G) + 6M_{2}(G) + 6(n+2)M_{1}(G) + 3n(m+n-1)^{2}\\
  &+&3m(n-4)^{2} -24(m+n-1),\\
  ECI^{1}(G^{-+-})&\leq& 9M_{1}(G) +9n(m+n) +9m(n-4) -36m,\\
  M_{ECI^{1}}^{1}(G^{-+-})&\leq& 9F(G) + 18M_{2}(G) + 18(n-3)M_{1}(G) + 9n(m+n-1)^{2} \\
  &+& 9m(n-4)^{2}- 36m(m+n-1).
\end{eqnarray*}
\end{thm}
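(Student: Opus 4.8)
The plan is to follow verbatim the scheme of Theorems~2--5, since $G^{-+-}$ is, like $G^{---}$ and $G^{--+}$, a transformation in which every eccentricity is bounded by an absolute constant. Write $V(G)=\{v_{1},\dots,v_{n}\}$ and $E(G)=\{e_{1},\dots,e_{m}\}$, so that $V(G^{-+-})=V(G)\cup E(G)$ and $|V(G^{-+-})|=m+n$. Two structural facts are needed. First, a diameter bound: I would check that $diam(G^{-+-})\le 3$, so that $e_{G^{-+-}}(u)\le 3$ for every $u\in V(G^{-+-})$. Second, the degree formulas, obtained by reading off the adjacency rules of $G^{-+-}$ directly (equivalently, by complementing the degrees of $G^{+-+}$ on $m+n-1$, since $G^{+-+}$ and $G^{-+-}$ form a complementary pair by Note~1): if $u_{i}$ is the vertex of $G^{-+-}$ coming from $v_{i}\in V(G)$ then it is joined to the $n-1-deg_{G}(v_{i})$ vertices not adjacent to $v_{i}$ and to the $m-deg_{G}(v_{i})$ edges not incident to $v_{i}$, giving $deg_{G^{-+-}}(u_{i})=m+n-1-2deg_{G}(v_{i})$; if $u_{j}$ comes from $e_{j}=v_{i}v_{k}\in E(G)$ then it is joined to the $deg_{G}(v_{i})+deg_{G}(v_{k})-2$ edges adjacent to $e_{j}$ and to the $n-2$ vertices not incident to $e_{j}$, giving $deg_{G^{-+-}}(u_{j})=n-4+deg_{G}(v_{i})+deg_{G}(v_{k})$.

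With these in hand, each of the four indices is treated by the same two moves used in Theorems~2--5. For a fixed index, split its defining sum over $V(G^{-+-})$ into the part indexed by the vertices coming from $V(G)$ and the part indexed by the vertices coming from $E(G)$; then replace $e_{G^{-+-}}(u)$ by $3$ (and $e_{G^{-+-}}(u)^{2}$ by $9$) and substitute the two degree formulas. For $M_{ECI}^{1}$, $ECI^{1}$ and $M_{ECI^{1}}^{1}$ this reduces the bound to a sum over $v_{i}\in V(G)$ of the polynomials $3(m+n-1-2deg_{G}(v_{i}))^{2}$, $9(m+n-1-2deg_{G}(v_{i}))$, $9(m+n-1-2deg_{G}(v_{i}))^{2}$ respectively, plus the analogous polynomials in $n-4+deg_{G}(v_{i})+deg_{G}(v_{k})$ summed over $e_{j}=v_{i}v_{k}\in E(G)$; for $I_{ECI}$ it reduces to $\sum_{v_{i}\in V(G)}\tfrac{1}{3(m+n-1-2deg_{G}(v_{i}))}+\sum_{e_{j}=v_{i}v_{k}\in E(G)}\tfrac{1}{3(n-4+deg_{G}(v_{i})+deg_{G}(v_{k}))}$.

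The remaining work is evaluation. For the three polynomial indices I would expand the squares and apply the identities $\sum_{v\in V(G)}1=n$, $\sum_{v\in V(G)}deg_{G}(v)=2m$, $\sum_{v\in V(G)}deg_{G}(v)^{2}=M_{1}(G)$ and, over edges, $\sum_{e=uv\in E(G)}1=m$, $\sum_{e=uv\in E(G)}(deg_{G}(u)+deg_{G}(v))=M_{1}(G)$, $\sum_{e=uv\in E(G)}(deg_{G}(u)^{2}+deg_{G}(v)^{2})=F(G)$, $\sum_{e=uv\in E(G)}2deg_{G}(u)deg_{G}(v)=2M_{2}(G)$ (so that $\sum_{e=uv\in E(G)}(deg_{G}(u)+deg_{G}(v))^{2}=F(G)+2M_{2}(G)$); collecting terms yields the stated bounds in $n$, $m$, $M_{1}(G)$, $M_{2}(G)$, $F(G)$. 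For $I_{ECI}(G^{-+-})$ I would use the same manipulation of $\sum 1/(A\mp B_{i})$ that produced the $-\tfrac{1}{12m}$ term in Case~1 of Theorem~2 and the $+\tfrac{1}{4}M_{1}^{-1}(G)$ term in Case~1 of Theorem~3, where $M_{1}^{-1}(G)$ is the quantity introduced in the proof of Theorem~1.

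The only genuinely $G^{-+-}$-specific inputs are the diameter bound $diam(G^{-+-})\le 3$ and the two degree formulas; everything downstream is mechanical. Hence the main obstacle is not conceptual but the bookkeeping in the final expansion --- in particular keeping the coefficient of $M_{1}(G)$ and the pure constants $n(m+n-1)^{2}$, $m(n-4)^{2}$ and $m(m+n-1)$ straight and consistent across all four cases.
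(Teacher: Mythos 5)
Your plan is essentially the paper's own proof: the same degree formulas $deg_{G^{-+-}}(u_i)=m+n-1-2\,deg_{G}(v_i)$ and $deg_{G^{-+-}}(u_j)=n-4+deg_{G}(v_i)+deg_{G}(v_k)$, the same eccentricity bound $e_{G^{-+-}}(u)\le 3$, the same split of the sum over vertex-vertices and edge-vertices, and the same expansion identities, so no new idea is involved. Two caveats on your ``mechanical'' final step: honest collection of terms gives $6(n-2)M_{1}(G)$ and $-24m(m+n-1)$ in the second bound and $18(n-2)M_{1}(G)$ and $-72m(m+n-1)$ in the fourth (the displayed coefficients $6(n+2)$, $-24(m+n-1)$, $18(n-3)$, $-36m(m+n-1)$ are only weaker, apparently typo-laden, versions of these, so ``collecting terms yields the stated bounds'' is not literally true), and for $I_{ECI}$ the manipulation you propose to import from Theorems 2--3 is not a valid algebraic identity, so that case needs a genuine inequality argument (for instance $\frac{1}{n-4+deg_{G}(v_i)+deg_{G}(v_k)}\le\frac{1}{deg_{G}(v_i)+deg_{G}(v_k)}$ for the edge part) rather than the formal splitting used in the paper.
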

\begin{proof}
Let $G = (V,E)$ with $V(G) = \{v_{1}, v_{2}, v_{3}, \cdots, v_{n}\}$ and $E(G) = \{e_{1}, e_{2}, e_{3}, \cdots, e_{m}\}$. Then
$V(G^{-+-}) = \{v_{1}, v_{2}, v_{3}, \cdots, v_{n}, e_{1}, e_{2}, e_{3}, \cdots, e_{m}\}$. As $|V(G^{-+-})|=m+n$ and $diam(G^{-+-}) \leq 3$, since $e_{G}(v) \leq diam(G)$ for every $v \in V(G)$, we obtain $e_{_{G^{-+-}}}(u) \leq 3$ for every $u \in G^{-+-}$. Let $u_{i}\in V(G^{-+-})$ be the corresponding vertex to $v_{i} \in V(G)$ and $u_{j} \in V(G^{-+-})$ be the corresponding vertex to $e_{j} \in E(G)$ in $G^{-+-}$. Then $deg_{_{G^{-+-}}}(u_{i}) =m+n-1- 2deg_{_{G}}(v_{i})$ and $deg_{_{G^{-+-}}}(u_{j}) =n-4+(deg_{_{G}}(v_{i})+ deg_{_{G}}(v_{j}))$ where $e_{j} = v_{i}v_{j}$. Therefore we have the following cases to consider:
\begin{description}
  \item[Case 1.]
  \begin{eqnarray*}
  I_{ECI}(G^{-+-}) &=& \sum\limits_{i=1}^{n} \frac{1}{e_{_{G^{-+-}}}(u)\cdot deg_{_{G^{-+-}}}(u)}\\
  &=& \sum\limits_{u_{i} \in V(G^{-+-}) \cap V(G)} \frac{1}{e_{_{G^{-+-}}}(u_{i})\cdot deg_{_{G^{-+-}}}(u_{i})} \\
  &+& \sum\limits_{u_{j} \in V(G^{-+-}) \cap E(G)} \frac{1}{e_{_{G^{-+-}}}(u_{j})\cdot deg_{_{G^{-+-}}}(u_{j})}.
\end{eqnarray*}
Since $e_{_{G^{-+-}}}(u) \leq 3$, we have
\begin{eqnarray*}
  I_{ECI}(G^{-+-}) &\leq&\sum\limits_{u_{i} \in V(G)} \frac{1}{\big[3\cdot (m+n-1-2(deg_{_{G}}(u_{i}))) \big]}\\
   &+& \sum\limits_{u_{j}, u_{k} \in E(G)}\frac{1}{\big[ 3\cdot (n-4+(deg_{_{G}}(u_{j}) +deg_{_{G}}(u_{k}))) \big]} \\
   &\leq& \frac{1}{3n(m+n-1)}+\frac{1}{3m(n-4)}-\frac{1}{12m}+\frac{1}{3}M_{1}^{-1}(G).
\end{eqnarray*}

  \item[Case 2.]
  \begin{eqnarray*}
  M_{ECI}^{1}(G) &=& \sum\limits_{i=1}^{n} e(v_{i})deg(v_{i})^{2}\\
  &=& \sum\limits_{u_{i} \in V(G^{-+-}) \cap V(G)} e_{_{G^{-+-}}}(u_{i})\cdot deg_{_{G^{-+-}}}(u_{i})^{2}\\
   &+& \sum\limits_{u_{j} \in V(G^{-+-}) \cap E(G)} e_{_{G^{-+-}}}(u_{j})\cdot deg_{_{G^{-+-}}}(u_{j})^{2}.
\end{eqnarray*}
Since $e_{_{G^{-+-}}}(u) \leq 3$, we have
\begin{eqnarray*}
  M^{1}_{ECI}(G^{-+-}) &\leq& \sum\limits_{u_{i} \in V(G)} \big[3(m+n-1-2deg_{_{G}}(u_{i}))^{2} \big] \\
  &+& \sum\limits_{u_{j}, u_{k} \in E(G)}\big[ 3(n-4 + (deg_{_{G}}(u_{j}) +deg_{_{G}}(u_{k})))^{2} \big] \\
  &=& 3n(m+n-1)^{2} + 12M_{1}(G) -24(m+n-1)m + 3m(n-4)^{2} + 3F(G) \\
  &+& 6M_{2}(G) + 6(n-4)M_{1}(G)\\
  &=& 3F(G) + 6M_{2}(G) + 6(n+2)M_{1}(G) + 3n(m+n-1)^{2} \\
  &+&3m(n-4)^{2} -24(m+n-1).
\end{eqnarray*}

  \item[Case 3.]
  \begin{eqnarray*}
  ECI^{1}(G) &=& \sum\limits_{i=1}^{n} e(v_{i})^{2}deg(v_{i})\\
  &=& \sum\limits_{u_{i} \in V(G^{-+-}) \cap V(G)} e_{_{G^{-+-}}}(u_{i})^{2}\cdot deg_{_{G^{-+-}}}(u_{i})\\
   &+& \sum\limits_{u_{j} \in V(G^{-+-}) \cap E(G)} e_{_{G^{-+-}}}(u_{j})^{2}\cdot deg_{_{G^{-+-}}}(u_{j}).
\end{eqnarray*}
As $e_{_{G^{-+-}}}(u) \leq 3$, we obtain
\begin{eqnarray*}
  ECI^{1}(G^{-+-}) &\leq& \sum\limits_{u_{i} \in V(G)} \big[3^{2}(m+n-2deg_{_{G}}(u_{i})) \big] \\
  &+& \sum\limits_{u_{j}, u_{k} \in E(G)}\big[ 3^{2}(n-4 +(deg_{_{G}}(u_{j}) +deg_{_{G}}(u_{k}))) \big] \\
 &=& 9M_{1}(G) +9n(m+n) +9m(n-4) -36m.
\end{eqnarray*}

  \item[Case 4.]
  \begin{eqnarray*}
  M^{1}_{ECI^{1}}(G) &=& \sum\limits_{i=1}^{n} (e(v_{i})^{2}deg(v_{i})^{2})\\
  &=& \sum\limits_{u_{i} \in V(G^{-+-}) \cap V(G)} e_{_{G^{-+-}}}(u_{i})^{2}\cdot deg_{_{G^{-+-}}}(u_{i})^{2}\\
   &+& \sum\limits_{u_{j} \in V(G^{-+-}) \cap E(G)} e_{_{G^{-+-}}}(u_{j})^{2}\cdot deg_{_{G^{-+-}}}(u_{j})^{2}.
\end{eqnarray*}
Since $e_{_{G^{-+-}}}(u) \leq 3$, we have
\begin{eqnarray*}
  ECI^{1}(G^{-+-}) &\leq& \sum\limits_{u_{i} \in V(G)} \big[3^{2}(m+n-1-2deg_{_{G}}(u_{i}))^{2} \big] \\
  &+& \sum\limits_{u_{j}, u_{k} \in E(G)}\big[ 3^{2}(n-4+(deg_{_{G}}(u_{j}) +deg_{_{G}}(u_{k})))^{2} \big] \\
  &=& 9F(G) + 18M_{2}(G) + 18(n-3)M_{1}(G) + 9n(m+n-1)^{2} \\
  &+& 9m(n-4)^{2}- 36m(m+n-1)
\end{eqnarray*}
\end{description}
as asserted.
\end{proof}

\begin{thm}
Let $G$ be an $(n,m)$ graph. Then
\begin{eqnarray*}
  I_{ECI}(G^{-++}) &\leq& \frac{1}{3n(n-1)}+\frac{1}{3}M_{1}^{-1}(G),\\
  M_{ECI}^{1}(G^{-++}) &\leq & 3F(G) + 6M_{2}(G) + 3n(n-1)^{2},\\
  ECI^{1}(G^{-++})&\leq& 9M_{1}(G) + 9n(n-1),\\
  M_{ECI^{1}}^{1}(G^{-++})&\leq& 9F(G) + 18M_{2}(G) + 9n(n-1)^{2}.
\end{eqnarray*}
\end{thm}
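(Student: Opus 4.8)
The plan is to follow the template of Theorems 1--6 essentially verbatim, since $G^{-++}$ (the case $x=-$, $y=+$, $z=+$) is structurally one of the simplest of these transformations. First I would fix the labelling $V(G^{-++})=\{u_1,\dots,u_n,u_{n+1},\dots,u_{n+m}\}$, with $u_i$ the vertex coming from $v_i\in V(G)$ for $i\le n$ and $u_{n+j}$ the one coming from $e_j\in E(G)$, so that $|V(G^{-++})|=m+n$. Next I would record the two degree formulas. Because $x=-$, a vertex-type $u_i$ is adjacent in $G^{-++}$ to the $n-1-deg_G(v_i)$ vertices not adjacent to $v_i$ in $G$, and because $z=+$ it is also adjacent to the $deg_G(v_i)$ edges incident with $v_i$; hence $deg_{G^{-++}}(u_i)=n-1$, a quantity independent of $G$. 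For $e_j=v_iv_j$, the choice $y=+$ makes $u_{n+j}$ adjacent to the $deg_G(v_i)+deg_G(v_j)-2$ edges sharing an endpoint with $e_j$, and $z=+$ adds the two incident vertices $v_i,v_j$, so $deg_{G^{-++}}(u_{n+j})=deg_G(v_i)+deg_G(v_j)$. Finally I would establish $diam(G^{-++})\le 3$, whence $e_{G^{-++}}(u)\le 3$ for every $u$; I expect this to be the one point that is not pure algebra, but it is routine to verify by exhibiting walks of length at most three through vertex--edge incidences and through the line-graph part of $G^{-++}$.

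With these facts in hand, each of the four indices splits as a sum over the $n$ vertex-type vertices plus a sum over the $m$ edge-type vertices, and in every term the eccentricity factor is replaced by its bound $3$ (or $9$ in the places where $e^{2}$ occurs). For $I_{ECI}$ this produces $\sum_{i}\tfrac{1}{3(n-1)}+\sum_{e_j=v_iv_j}\tfrac{1}{3(deg_G(v_i)+deg_G(v_j))}$, and the second sum is $\tfrac13 M_1^{-1}(G)$ in the notation introduced in the proof of Theorem 1. For $M^1_{ECI}$ one obtains $3n(n-1)^2+3\sum_{e_j=v_iv_j}(deg_G(v_i)+deg_G(v_j))^2$; expanding the square and using the identities $\sum_{uv\in E(G)}\bigl(deg_G(u)^2+deg_G(v)^2\bigr)=F(G)$ (each vertex $w$ is counted $deg_G(w)$ times, contributing $deg_G(w)^3$) and $\sum_{uv\in E(G)}deg_G(u)deg_G(v)=M_2(G)$ turns it into $3F(G)+6M_2(G)+3n(n-1)^2$.

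The remaining two cases go exactly the same way. For $ECI^1$ the bound $e^2\le 9$ gives $9n(n-1)+9\sum_{e_j=v_iv_j}(deg_G(v_i)+deg_G(v_j))$, and the alternative form $M_1(G)=\sum_{uv\in E(G)}(deg_G(u)+deg_G(v))$ from (2) collapses the second sum to $9M_1(G)$. For $M^1_{ECI^1}$ one combines $e^2\le 9$ with the same square expansion used for $M^1_{ECI}$ to reach $9n(n-1)^2+9F(G)+18M_2(G)$. Note that, in contrast with the $G^{+++}$ case of Theorem 1, no eccentricity-weighted Zagreb terms such as $F_{ECI}$ or $M^2_{ECI}$ appear here: the eccentricity estimate is an absolute constant rather than $e_G(u)+1$, so the only algebra involved is that of ordinary Zagreb-type sums together with the constant vertex degree $n-1$. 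Accordingly, the single genuine obstacle is the diameter bound $diam(G^{-++})\le 3$, and once it is in place the four inequalities follow by the bookkeeping just described.
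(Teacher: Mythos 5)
Your proposal is correct and takes essentially the same route as the paper's proof: the same degree formulas $deg_{G^{-++}}(u_i)=n-1$ and $deg_{G^{-++}}(u_j)=deg_{G}(v_i)+deg_{G}(v_j)$, the same eccentricity bound $e_{G^{-++}}(u)\leq 3$ coming from $diam(G^{-++})\leq 3$, and the same split into vertex-type and edge-type sums evaluated with the standard identities for $M_{1}(G)$, $M_{2}(G)$ and $F(G)$. The only difference is cosmetic: you sketch justifications for the degree counts and the diameter bound, which the paper simply asserts (citing the transformation-graph literature), so no further comparison is needed.
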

\begin{proof}
Let $G = (V,E)$ with $V(G) = \{v_{1}, v_{2}, v_{3}, \cdots, v_{n}\}$ and $E(G) = \{e_{1}, e_{2}, e_{3}, \cdots, e_{m}\}$. Then
$V(G^{-++}) = \{v_{1}, v_{2}, v_{3}, \cdots, v_{n}, e_{1}, e_{2}, e_{3}, \cdots, e_{m}\}$. Clearly $|V(G^{-++})|=m+n$ and $diam(G^{-++}) \leq 3$. Since $e_{G}(v) \leq diam(G)$ for every $v \in V(G)$, we have $e_{_{G^{-++}}}(u) \leq 3$ for every $u \in G^{-++}$. Let $u_{i}\in V(G^{-++})$ be the corresponding vertex to $v_{i} \in V(G)$ and $u_{j} \in V(G^{-++})$ be the corresponding vertex to $e_{j} \in E(G)$ in $G^{-++}$. Then $deg_{_{G^{-++}}}(u_{i}) =n-1$ and $deg_{_{G^{-++}}}(u_{j}) =(deg_{_{G}}(v_{i})+ deg_{_{G}}(v_{j}))$ where $e_{j} = v_{i}v_{j}$. Therefore
\begin{description}
  \item[Case 1.]
 \begin{eqnarray*}
  I_{ECI}(G^{-++}) &=& \sum\limits_{i=1}^{n} \frac{1}{e_{_{G^{-++}}}(u)\cdot deg_{_{G^{-++}}}(u)}\\
  &=& \sum\limits_{u_{i} \in V(G^{-++}) \cap V(G)} \frac{1}{e_{_{G^{-++}}}(u_{i})\cdot deg_{_{G^{-++}}}(u_{i})} \\
  &+& \sum\limits_{u_{j} \in V(G^{-++}) \cap E(G)} \frac{1}{e_{_{G^{-++}}}(u_{j})\cdot deg_{_{G^{-++}}}(u_{j})}.
\end{eqnarray*}
Since $e_{_{G^{-++}}}(u) \leq 3$, we obtain
\begin{eqnarray*}
  I_{ECI}(G^{-++}) &\leq&\sum\limits_{u_{i} \in V(G)} \frac{1}{\big[3\cdot (n-1) \big]}\\
   &+& \sum\limits_{u_{j}, u_{k} \in E(G)}\frac{1}{\big[ 3\cdot (deg_{_{G}}(u_{j}) +deg_{_{G}}(u_{k})) \big]} \\
   &\leq& \frac{1}{3n(n-1)}+\frac{1}{3}M_{1}^{-1}(G).
\end{eqnarray*}

  \item[Case 2.]
  \begin{eqnarray*}
  M_{ECI}^{1}(G) &=& \sum\limits_{i=1}^{n} (e(v_{i})deg(v_{i})^{2})\\
  &=& \sum\limits_{u_{i} \in V(G^{-++}) \cap V(G)} e_{_{G^{-++}}}(u_{i})\cdot deg_{_{G^{-++}}}(u_{i})^{2}\\
   &+& \sum\limits_{u_{j} \in V(G^{-++}) \cap E(G)} e_{_{G^{-++}}}(u_{j})\cdot deg_{_{G^{-++}}}(u_{j})^{2}.
\end{eqnarray*}
As $e_{_{G^{-++}}}(u) \leq 3$, we obtain
\begin{eqnarray*}
  M^{1}_{ECI}(G^{-++}) &\leq& \sum\limits_{u_{i} \in V(G)} \big[3 (n-1)^{2} \big] \\
  &+& \sum\limits_{u_{j}, u_{k} \in E(G)}\big[ 3(deg_{_{G}}(u_{j}) +deg_{_{G}}(u_{k}))^{2} \big] \\
  &=& 3F(G) + 6M_{2}(G) + 3n(n-1)^{2}.
\end{eqnarray*}

  \item[Case 3.]
  \begin{eqnarray*}
  ECI^{1}(G) &=& \sum\limits_{i=1}^{n} (e(v_{i})^{2}deg(v_{i}))\\
  &=& \sum\limits_{u_{i} \in V(G^{-++}) \cap V(G)} e_{_{G^{-++}}}(u_{i})^{2}\cdot deg_{_{G^{-++}}}(u_{i})\\
   &+& \sum\limits_{u_{j} \in V(G^{-++}) \cap E(G)} e_{_{G^{-++}}}(u_{j})^{2}\cdot deg_{_{G^{-++}}}(u_{j}).
\end{eqnarray*}
The fact that $e_{_{G^{-++}}}(u) \leq 3$ implies that 
\begin{eqnarray*}
  ECI^{1}(G^{-++}) &\leq& \sum\limits_{u_{i} \in V(G)} \big[3^{2}(n-1) \big] \\
  &+& \sum\limits_{u_{j}, u_{k} \in E(G)}\big[ 3^{2}(deg_{_{G}}(u_{j}) +deg_{_{G}}(u_{k})) \big] \\
&=& 9M_{1}(G) + 9n(n-1).
\end{eqnarray*}

  \item[Case 4.] Finally we have
  \begin{eqnarray*}
  M^{1}_{ECI^{1}}(G) &=& \sum\limits_{i=1}^{n} (e(v_{i})^{2}deg(v_{i})^{2})\\
  &=& \sum\limits_{u_{i} \in V(G^{-++}) \cap V(G)} e_{_{G^{-++}}}(u_{i})^{2}\cdot deg_{_{G^{-++}}}(u_{i})^{2}\\
   &+& \sum\limits_{u_{j} \in V(G^{-++}) \cap E(G)} e_{_{G^{-++}}}(u_{j})^{2}\cdot deg_{_{G^{-++}}}(u_{j})^{2}.
\end{eqnarray*}
Since $e_{_{G^{-++}}}(u) \leq 3$, we get
\begin{eqnarray*}
  ECI^{1}(G^{-++}) &\leq& \sum\limits_{u_{i} \in V(G)} \big[3^{2}(n-1)^{2} \big] \\
  &+& \sum\limits_{u_{j}, u_{k} \in E(G)}\big[3^{2}(deg_{_{G}}(u_{j}) +deg_{_{G}}(u_{k}))^{2} \big] \\
  &=& 9F(G) + 18M_{2}(G) + 9n(n-1)^{2}
\end{eqnarray*}
\end{description}
as asserted.
\end{proof}

\begin{thm}
Let $G$ be an $(n,m)$ graph. Then
\begin{eqnarray*}
  I_{ECI}(G^{+--}) &\leq& \frac{1}{4mn}+\frac{1}{4m(m+n-1)}-\frac{1}{4}M_{1}^{-1}(G),\\
  M_{ECI}^{1}(G^{+--}) &\leq &  4F(G) + 8M_{2}(G) - 8(m+n-1)M_{1}(G) + 4nm^{2} + 4m(m+n-1)^{2},\\
  ECI^{1}(G^{+--})&\leq& 16mn +16m(m+n-1) -16M_{1}(G),\\
  M_{ECI^{1}}^{1}(G^{+--})&\leq& 16F(G) +32M_{2}(G) -32(m+n-1)M_{1}(G) + 16m^{2}n + 16m(m+n-1)^{2}.
\end{eqnarray*}
\end{thm}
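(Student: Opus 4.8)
The plan is to run, for the transformation graph $G^{+--}$, the same argument that establishes Theorems~1 through~7. First I would pin down the three ingredients that feed that template. Using that $G^{+--}$ is, by Note~1, the complement of $G^{-++}$ and arguing with paths that alternate between vertex-type and edge-type vertices, one checks that $diam(G^{+--})\le 4$, so $e_{G^{+--}}(u)\le 4$ for every $u\in V(G^{+--})$, while $|V(G^{+--})|=m+n$. Next I would read off the degrees. If $u_i$ is the vertex of $G^{+--}$ coming from $v_i\in V(G)$, then the rule $x=+$ joins it to the $deg_G(v_i)$ neighbours of $v_i$ in $G$ and the rule $z=-$ joins it to the $m-deg_G(v_i)$ edges of $G$ not incident with $v_i$, whence $deg_{G^{+--}}(u_i)=m$. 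If $u_j$ comes from the edge $e_j=v_av_b$, then the rule $y=-$ joins it to the $(m-1)-(deg_G(v_a)+deg_G(v_b)-2)$ edges not sharing an end with $e_j$ and the rule $z=-$ joins it to the $n-2$ vertices not incident with $e_j$, whence $deg_{G^{+--}}(u_j)=m+n-1-(deg_G(v_a)+deg_G(v_b))$. These coincide with the degree formulas quoted in the statement.

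With this data in hand, each of the four invariants is a sum over $V(G^{+--})=V(G)\cup E(G)$, which I would split into a vertex-part indexed by $v_i\in V(G)$ and an edge-part indexed by the edges $u_ju_k\in E(G)$. In every case I would bound the eccentricity factor $e_{G^{+--}}(u)$, or its square, by $4$ (respectively $16$) and then substitute the two degree formulas. The vertex-part is immediate because the degree there is the constant $m$: it contributes $\tfrac14\sum_{u_i\in V(G)}m^{-1}$ to the bound for $I_{ECI}(G^{+--})$, $4nm^2$ to that for $M_{ECI}^1(G^{+--})$, $16mn$ to that for $ECI^1(G^{+--})$, and $16m^2n$ to that for $M_{ECI^1}^1(G^{+--})$. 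For the edge-part one expands $(m+n-1-(deg_G(u_j)+deg_G(u_k)))^t$ with $t=1$ or $t=2$ and applies the three standard edge identities $\sum_{u_ju_k\in E(G)}(deg_G(u_j)+deg_G(u_k))=M_1(G)$ (which is formula~(2)), $\sum_{u_ju_k\in E(G)}(deg_G(u_j)^2+deg_G(u_k)^2)=F(G)$, and $\sum_{u_ju_k\in E(G)}deg_G(u_j)deg_G(u_k)=M_2(G)$, together with $|E(G)|=m$, to gather the remaining constants such as $4m(m+n-1)^2$ and the coefficients of $M_1(G)$. For $I_{ECI}(G^{+--})$ the edge-part is instead the reciprocal sum $\tfrac14\sum_{u_ju_k\in E(G)}(m+n-1-(deg_G(u_j)+deg_G(u_k)))^{-1}$, which I would estimate through $M_1^{-1}(G)$ just as in Case~1 of the previous theorems.

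I expect the only real work to be the arithmetic of the edge-part: once the square is expanded one must keep track of the coefficient of $m+n-1$ and correctly split $(deg_G(u_j)+deg_G(u_k))^2$ into the $F(G)$ piece from $deg_G(u_j)^2+deg_G(u_k)^2$ and the $2M_2(G)$ piece from the cross term, so that the four displayed bounds are recovered term by term. The one ingredient that is not pure bookkeeping is the diameter estimate $diam(G^{+--})\le 4$; granted that, the rest of the proof is a direct substitution following the proofs of Theorems~1 through~7, and comparing the resulting expressions with the claimed bounds finishes the argument.
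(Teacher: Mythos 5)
Your plan coincides with the paper's own proof of this theorem: the same splitting of the sums over $V(G^{+--})=V(G)\cup E(G)$ into a vertex-part and an edge-part, the same bounds $e_{G^{+--}}(u)\le 4$ coming from $diam(G^{+--})\le 4$, the same degree formulas $deg_{G^{+--}}(u_i)=m$ and $deg_{G^{+--}}(u_j)=m+n-1-(deg_G(v_i)+deg_G(v_j))$, and the same expansion of the edge-part via $M_1(G)$, $F(G)$ and $M_2(G)$ (with the reciprocal case handled through $M_1^{-1}(G)$ exactly as in the earlier theorems). No substantive difference from the paper's argument.
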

\begin{proof}
Let $G = (V,E)$ with $V(G) = \{v_{1}, v_{2}, v_{3}, \cdots, v_{n}\}$ and $E(G) = \{e_{1}, e_{2}, e_{3}, \cdots, e_{m}\}$. Then
$V(G^{+--}) = \{v_{1}, v_{2}, v_{3}, \cdots, v_{n}, e_{1}, e_{2}, e_{3}, \cdots, e_{m}\}$. Clearly $|V(G^{+--})|=m+n$ and $diam(G^{+--}) \leq 4$. Since $e_{G}(v) \leq diam(G)$ for every $v \in V(G)$, we obtain $e_{_{G^{+--}}}(u) \leq 4$ for every $u \in G^{+--}$. Let $u_{i}\in V(G^{+--})$ be the corresponding vertex to $v_{i} \in V(G)$ and $u_{j} \in V(G^{+--})$ be the corresponding vertex to $e_{j} \in E(G)$ in $G^{+--}$. Then $deg_{_{G^{+--}}}(u_{i}) =m$ and $deg_{_{G^{+--}}}(u_{j}) =m+n-1-(deg_{_{G}}(v_{i})+ deg_{_{G}}(v_{j}))$ where $e_{j} = v_{i}v_{j}$. Therefore
\begin{description}
  \item[Case 1.]
\begin{eqnarray*}
  I_{ECI}(G^{+--}) &=& \sum\limits_{i=1}^{n} \frac{1}{e_{_{G^{+--}}}(u)\cdot deg_{_{G^{+--}}}(u)}\\
  &=& \sum\limits_{u_{i} \in V(G^{+--}) \cap V(G)} \frac{1}{e_{_{G^{+--}}}(u_{i})\cdot deg_{_{G^{+--}}}(u_{i})}\\
   &+& \sum\limits_{u_{j} \in V(G^{+--}) \cap E(G)} \frac{1}{e_{_{G^{+--}}}(u_{j})\cdot deg_{_{G^{+--}}}(u_{j})}.
\end{eqnarray*}
Since $e_{_{G^{+--}}}(u) \leq 4$, we have
\begin{eqnarray*}
  I_{ECI}(G^{+--}) &\leq&\sum\limits_{u_{i} \in V(G)} \frac{1}{\big[4\cdot (m) \big]}\\
   &+& \sum\limits_{u_{j}, u_{k} \in E(G)}\frac{1}{\big[ 4\cdot(m+n-1- (deg_{_{G}}(u_{j}) +deg_{_{G}}(u_{k}))) \big]} \\
   &\leq& \frac{1}{4mn}+\frac{1}{4m(m+n-1)}-\frac{1}{4}M_{1}^{-1}(G).
\end{eqnarray*}

  \item[Case 2.]
  \begin{eqnarray*}
  M_{ECI}^{1}(G) &=& \sum\limits_{i=1}^{n} (e(v_{i})deg(v_{i})^{2})\\
  &=& \sum\limits_{u_{i} \in V(G^{+--}) \cap V(G)} e_{_{G^{+--}}}(u_{i})\cdot deg_{_{G^{+--}}}(u_{i})^{2}\\
   &+& \sum\limits_{u_{j} \in V(G^{+--}) \cap E(G)} e_{_{G^{+--}}}(u_{j})\cdot deg_{_{G^{+--}}}(u_{j})^{2}.
\end{eqnarray*}
Since $e_{_{G^{+--}}}(u) \leq 4$, we obtain
\begin{eqnarray*}
  M^{1}_{ECI}(G^{+--}) &\leq& \sum\limits_{u_{i} \in V(G)} \big[4m^{2} \big] \\
  &+& \sum\limits_{u_{j}, u_{k} \in E(G)}\big[ 4 (m+n-1 -(deg_{_{G}}(u_{j}) +deg_{_{G}}(u_{k})))^{2} \big] \\
  &=& 4nm^{2} + 4m(m+n-1)^{2} + 4F(G) + 8M_{2}(G) -8(m+n-1)M_{1}(G) \\
  &=& 4F(G) + 8M_{2}(G) - 8(m+n-1)M_{1}(G) + 4nm^{2} + 4m(m+n-1)^{2}.
\end{eqnarray*}

  \item[Case 3.]
  \begin{eqnarray*}
  ECI^{1}(G) &=& \sum\limits_{i=1}^{n} (e(v_{i})^{2}deg(v_{i}))\\
  &=& \sum\limits_{u_{i} \in V(G^{+--}) \cap V(G)} e_{_{G^{+--}}}(u_{i})^{2}\cdot deg_{_{G^{+--}}}(u_{i})\\
   &+& \sum\limits_{u_{j} \in V(G^{+--}) \cap E(G)} e_{_{G^{+--}}}(u_{j})^{2}\cdot deg_{_{G^{+--}}}(u_{j}).
\end{eqnarray*}
As $e_{_{G^{+--}}}(u) \leq 4$, we obtain the following:
\begin{eqnarray*}
  ECI^{1}(G^{+--}) &\leq& \sum\limits_{u_{i} \in V(G)} \big[4^{2}(m) \big] \\
  &+& \sum\limits_{u_{j}, u_{k} \in E(G)}\big[ 4^{2}(m+n-1-(deg_{_{G}}(u_{j}) +deg_{_{G}}(u_{k}))) \big] \\
&=& 16mn +16m(m+n-1) -16M_{1}(G).
\end{eqnarray*}

  \item[Case 4.] Finally we have
  \begin{eqnarray*}
  M^{1}_{ECI^{1}}(G) &=& \sum\limits_{i=1}^{n} (e(v_{i})^{2}deg(v_{i})^{2})\\
  &=& \sum\limits_{u_{i} \in V(G^{+--}) \cap V(G)} e_{_{G^{+--}}}(u_{i})^{2}\cdot deg_{_{G^{+--}}}(u_{i})^{2}\\
   &+& \sum\limits_{u_{j} \in V(G^{+--}) \cap E(G)} e_{_{G^{+--}}}(u_{j})^{2}\cdot deg_{_{G^{+--}}}(u_{j})^{2}.
\end{eqnarray*}
Since $e_{_{G^{+--}}}(u) \leq 4$, we have
\begin{eqnarray*}
  ECI^{1}(G^{+--}) &\leq& \sum\limits_{u_{i} \in V(G)} \big[4^{2}\cdot (m)^{2} \big] \\
  &+& \sum\limits_{u_{j}, u_{k} \in E(G)}\big[ 4^{2}(m+n-1-(deg_{_{G}}(u_{j}) +deg_{_{G}}(u_{k})))^{2} \big] \\
  &=& 16F(G) +32M_{2}(G) -32(m+n-1)M_{1}(G) + 16m^{2}n + 16m(m+n-1)^{2}.
\end{eqnarray*}
\end{description}
\end{proof}

\noindent \textbf{Conclusion:} In this paper, we have obtained bounds for four newly introduced eccentric-based topological indices for eight kinds of transformation graphs of total graphs.


\begin{thebibliography}{99}
\bibitem{1} Ashrafi, A. R.,  Do\v{s}li\'c, T. and  Hamzeh, A., (2010),   \emph{The Zagreb coindices of graph operations}, Discrete Appl. Math., 158, 1571-1578.

\bibitem{2} Furtula, B. and Gutman, I., (2015), \textit{A forgotten topological index}, Journal of Mathematical Chemistry, 53, 1184-1190.

\bibitem{3} Gutman, I. and Trinajsti$\acute{c}$, N., (1972),  \emph{Graph theory and molecular orbitals. Total $\pi$-electron energy of alternant hydrocarbons}, Chem. Phys. Lett., 17, 535-538.

\bibitem{4} Harary, F., (1969), \emph{Graph Theory}, Addison-Wesely, Reading.

\bibitem{5} Hosamani, S. M. and Gutman, I., (2014), \emph{Zagreb indices of transformation graphs and total transformation graphs,} Appl. Math. Comput., 247, 1156-1160.

\bibitem{6} Hosamani, S. M. and Basavanagoud, B., (2015),  \emph{New upper bounds for the first Zagreb index}, MATCH Commun. Math. Comput. Chem., 74 (1), 97-101.

\bibitem{7} Sharma, V., Goswami, R. and Madan, A. K., (1997), \emph{Eccentric-connectivity index: A novel highly discriminating topological descriptor for structure-property and structure-activity studies}, J. Chem. Inf. Comput. Sci., 37 (2), 273-282.

\bibitem{8} Wu, B. and Meng, J., (2001), \emph{Basic properties of total transformation graphs}, J. Math. Study, 34, 109-116.

\bibitem{9} Xu, L. and Wu, B., (2008), {\it Transformation graph $G^{-+-}$}, Discrete Math., 308, 5144-5148.

\bibitem{10} Yi. L. and Wu, B., (2009), \emph{The transformation graph $G^{++-}$}, Australas. J. Comb., 44, 37-42.
\end{thebibliography}
\end{document}